\documentclass[a4paper,11pt]{article}
\usepackage[T1]{fontenc}
\usepackage[utf8]{inputenc}
\usepackage{csquotes}
\usepackage[british]{babel}
\usepackage{appendix}
\usepackage{amsfonts}
\usepackage{bbold}
\usepackage{mathtools}
\usepackage{amsmath}

\usepackage{relsize}
\usepackage{amssymb}
\usepackage{amsfonts}
\usepackage{dsfont}
\usepackage{amsthm}
\usepackage{enumitem}
\usepackage{thmtools}
\usepackage{stmaryrd}
\usepackage{authblk}
\usepackage{bm}
\newcommand{\vect}[1]{\bm{#1}}

\usepackage{listings}
\usepackage{xcolor}

\usepackage[a4paper,top=3cm,bottom=3cm,left=2.5cm,right=2.5cm,marginparwidth=1.75cm]{geometry}

\usepackage[dvipsnames]{xcolor} 
\usepackage{multirow} 

\usepackage{enumitem} 
\usepackage{array} 

\usepackage{hyperref}
\usepackage[capitalise]{cleveref}
\numberwithin{equation}{section}

\allowdisplaybreaks

\newtheorem{definition}{Definition}
\numberwithin{definition}{section}
\newtheorem{example}[definition]{Example}

\newtheorem{remark}[definition]{Remark}

\newtheorem{theorem}[definition]{Theorem}
\newtheorem{proposition}[definition]{Proposition}

\newtheorem{lemma}[definition]{Lemma}

\declaretheorem[sibling=definition]{Lemma} 

\newcommand{\R}{\mathbb{R}}

\newcommand{\N}{\mathbb{N}}
\newcommand{\E}{\mathbb{E}}
\newcommand{\D}{\mathbb{D}}
\newcommand{\p}{\mathcal{P}}

\renewcommand{\P}{\mathbb{P}}

\definecolor{rblue}{rgb}{0.0, 0.22, 0.66}
\definecolor{bred}{rgb}{0.8, 0.25, 0.33}
\definecolor{fgreen}{rgb}{0.24, 0.82, 0.44}
\usepackage{tcolorbox}
\tcbuselibrary{theorems}
\usepackage{epsfig}

\newtcbtheorem{mytheo}{Theorem}%
{colback=green!5,colframe=green!35!black,fonttitle=\bfseries}{th}
\newtcbtheorem{myprop}{Proposition}%
{colback=red!5,colframe=red!35!black,fonttitle=\bfseries}{th}
\newtcbtheorem{mydef}{Definition}%
{colback=blue!5,colframe=blue!35!black,fonttitle=\bfseries}{th}

\newtcbtheorem{myex}{Exemple}{fonttitle=\bfseries, colbacktitle=lime!20!white,coltitle=black!75!black, colback=lime!5, colframe=lime!45!black}{th}

\usepackage{enumitem}

\title{Moment duality and propagation of exchangeability}

\author[1]{Adrián González Casanova}
\author[2]{Ariel Offenstadt}
\author[3]{Arno Siri-Jégousse}

\affil[1]{School of Mathematics and Statistical Sciences and Biodesign institute, Arizona State University, Wexler Hall, 85281
Tempe, Arizona, USA }
\affil[2]{Universit\'e Paris Cit\'e, CNRS, MAP5, F-75006 Paris, France}
\affil[3]{IIMAS, UNAM, Mexico City, Mexico}

\date{}

\begin{document}
\maketitle
\begin{abstract}
We identify necessary and sufficient conditions for a class of random mappings to send exchangeable $\{0,1\}$-sequences to other exchangeable $\{0,1\}$-sequences. We call this property the propagation of exchangeability, and show that any mapping that propagates exchangeability induces a pair of forward- and backward-in-time processes that are moment duals. This establishes a transparent, tractable, and applicable connection between moment duality and the propagation of exchangeability. We illustrate the usefulness of our results by constructing lookdown models for $\Xi$-Fleming--Viot processes with frequency-dependent selection.
\end{abstract}

\section{Introduction}

Moment duality \cite{EK86,JK14,Lig85,SSV18} and exchangeability \cite{Ald82,Ald85,Hoo79,HS55,Kal92,Kin78,Kin78b} are two central concepts in probability theory, with deep applications in interacting particle systems, Bayesian statistics or population genetics.

Moment duality provides a powerful link between two stochastic processes: typically, a forward-in-time process describing type frequencies is related to a backward-in-time process describing genealogical quantities, such as the number of (potential) ancestral lineages. It relates the moments of a stochastic process taking values in $[0,1]$ to the probability generating function of a process taking values in $\mathbb{N}$. 
More precisely, two processes $\{W_t\}_{t}$ and $\{A_t\}_{t}$ are said to be moment dual if, for all $t \ge 0$, $x \in [0,1]$, and $n \in \mathbb{N}$,
\begin{equation}\label{e1}
    \mathbb{E}[W_t^n|W_0=x] = \mathbb{E}[x^{A_t}|A_0=n].
\end{equation}

Such dualities uniquely characterize the finite dimensional distribution of both processes and allow one to study complicated forward-in-time dynamics through simpler backward-in-time dynamics, or vice versa. Classical examples include the duality between the voter model and coalescing random walks, the duality between the Wright-Fisher diffusion and Kingman's coalescent, and its many extensions.

\begin{figure}[h!]
    \centering
    \includegraphics[width=0.5\linewidth]{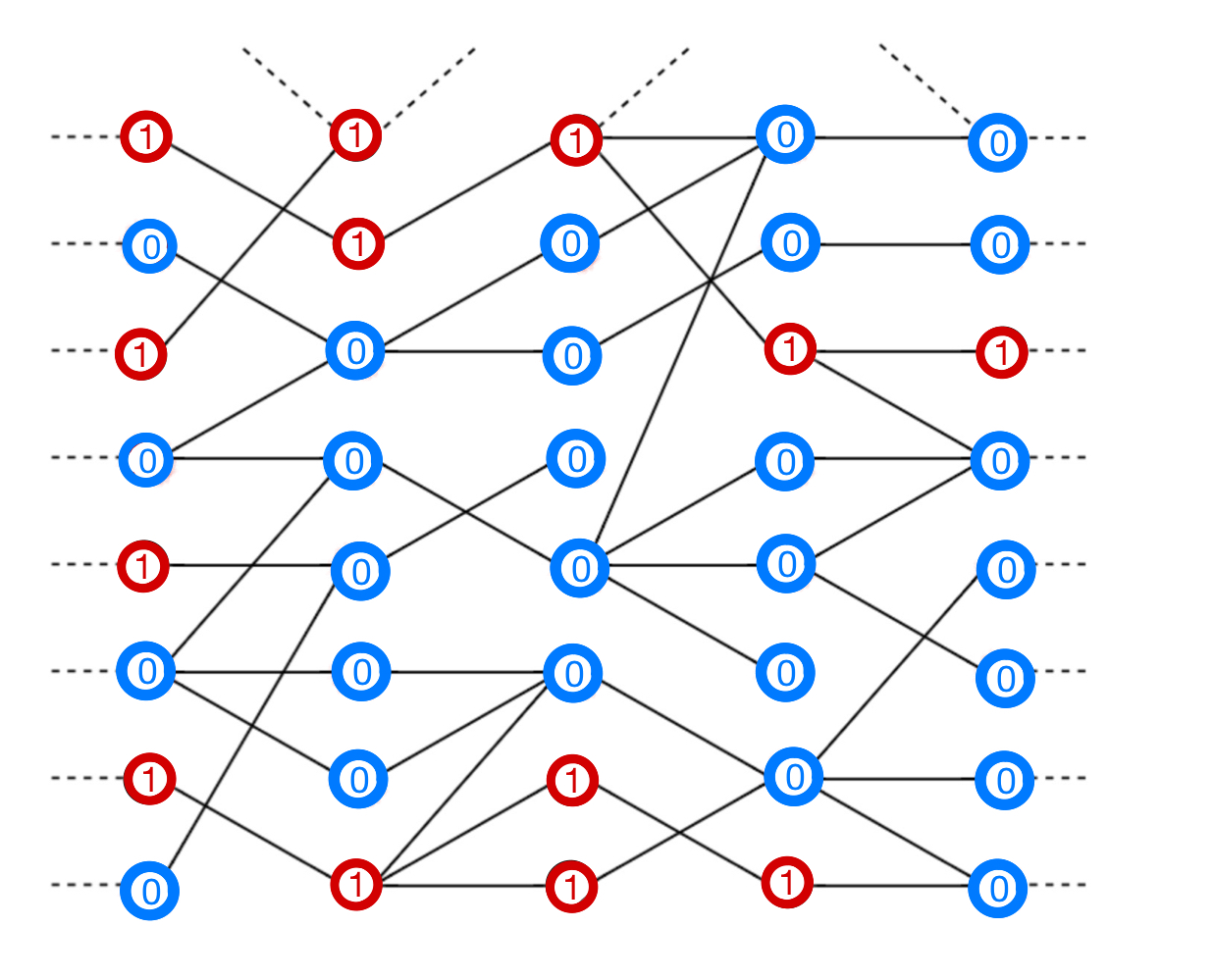}
    \caption{This figure depicts a graph with vertex set $\mathbb{Z}\times\mathbb{N}$. A vertex $(k,i)$ represents {individual $i$ in generation $k$}. Edges connect individuals in generation $k$ to individuals in generation $k+1$ and represent potential ancestral relationships. Each individual inherits its type from its potential ancestors in the previous generation according to the following rule: an individual $(k,i)$ is of type~$1$ if and only if all of its potential ancestors are of type~$1$; otherwise, it is of type~$0$. A sequence of zeros and ones is transformed into a new sequence. In such a model, when does exchangeability propagate? 
} 
    \label{Fig1}
\end{figure}

On the other hand, a sequence of random variables is exchangeable if its joint distribution is invariant under finite permutations. Formally, a sequence of random variables is exchangeable if in distribution $\{X{(i)}\}_{i}=\{X{(\sigma(i))}\}_{i}$ for any finite permutation $\sigma:\mathbb{N}\mapsto \mathbb{N}$.
Exchangeability is a fundamental property of random sequences and has been extensively studied since the work of de Finetti.  While exchangeable sequences are generally not independent, their dependence structure is transparent. De Finetti's theorem states that any $\{0,1\}$-valued exchangeable sequence can be represented as a sequence of conditionally independent Bernoulli random variables given a random parameter $\Theta$. Equivalently, the probability of observing $n$ ones in a sample of size $n$ is given by 
\begin{equation}\label{e2}
    \mathbb{E}[\prod_{i=1}^nX(i)|\Theta]=\Theta^n.
\end{equation}

Comparing equations \eqref{e1} and \eqref{e2} reveals a similarity: both frameworks encode probabilities of sampling configurations through powers of a random variable. This observation suggests a deep connection between exchangeability and moment duality, beyond their frequent co-occurrence in classical models.

A canonical example illustrating co-occurrence is provided by Cannings models. In these fixed-size population models with discrete generations, the vector giving the number of children of all individuals in a generation is supposed to be exchangeable. As a result, if the vector of types is initially exchangeable, then it remains exchangeable in every subsequent generation. Moreover, it is well known that the type-frequency process in Cannings models is moment dual to the block-counting process describing the number of ancestral lineages backward in time. However, recent works \cite{GPSJ,KJJS, SJW} suggest that  the exchangeability assumption on the progeny vector can be relaxed without losing moment duality.

The goal of this paper is to make the relationship between exchangeability and moment duality explicit and structural. Rather than starting from a specific model, we address the following general question: given an infinite exchangeable sequence of random variables and a random mechanism that transforms this sequence into a new one, under which conditions is exchangeability preserved? In other words, which random functions from the space of infinite sequences of zeros and ones to itself send exchangeable sequences to exchangeable sequences?

To answer this question, we introduce a framework based on random mappings that assign to each integer a set of integers. This mapping will be interpreted as assigning to any individual a set of potential ancestors in the previous generation (see Figure \ref{Fig1}). 
The key observation is that exchangeability of the resulting type sequence does not require the ancestral assignments themselves to be exchangeable. Instead, we identify a precise and non-trivial condition, that we call the label-forgetfulness property, which is necessary and sufficient for the propagation of exchangeability. Informally, label-forgetfulness means that the distribution of the number of potential ancestors of a sample depends only on the sample size and not on the specific labels involved.

Our first result, Theorem~\ref{forgetful}, shows that this label-forgetfulness property is equivalent to propagation of exchangeability in the $\{0,1\}$-valued case. 
Building on this, we show in Theorem~\ref{thduality} that whenever exchangeability propagates, the forward-in-time de Finetti frequency process is in moment duality with the backward-in-time process counting ancestral family sizes. 
In this sense, moment duality and propagation of exchangeability are two faces of the same coin.

Beyond its conceptual clarity, this framework has concrete applications. It provides a simple and transparent method for proving exchangeability. As a proof of concept, we focus on population genetics models, where the label-forgetful machinery leads to transparent constructions of lookdown processes that extend the work of Bah and Pardoux \cite{BP15}, see Theorems \ref{lazy} and \ref{theomix}. 
Indeed, using our approach, we obtain finite representations of stochastic differential equations, such as the $\Xi$ -Wright-Fisher jump-diffusion with frequency-dependent selection, and recover their dual genealogical processes in a unified manner. \\

\section{Conditions for propagation of exchangeability}
\label{partI}
\subsection{The label-forgetfulness property}

     Consider two independent sequences \(\{X_0(i)\}_i\) and \(\{L(i)\}_{i}\), where for each $i\in\N$, $X_0{(i)}$ is a \(\{0,1\}\)-valued random variable  and 
     \(L(i)\) is a \(\mathcal P(\mathbb{N})\)-valued random variable, where \(\mathcal P(\mathbb{N})\) is the space of non-empty subsets of \(\mathbb{N}\).
     Define the sequence \(\{X_1{(i)}\}_{i  }\)
     such that 
     \begin{equation}
     \label{eq:transmission}
         X_1{(i)}=\inf_{j \in L(i)} X_0{(j)}.
     \end{equation}

\begin{figure}
    \centering
    \includegraphics[width=0.5\linewidth]{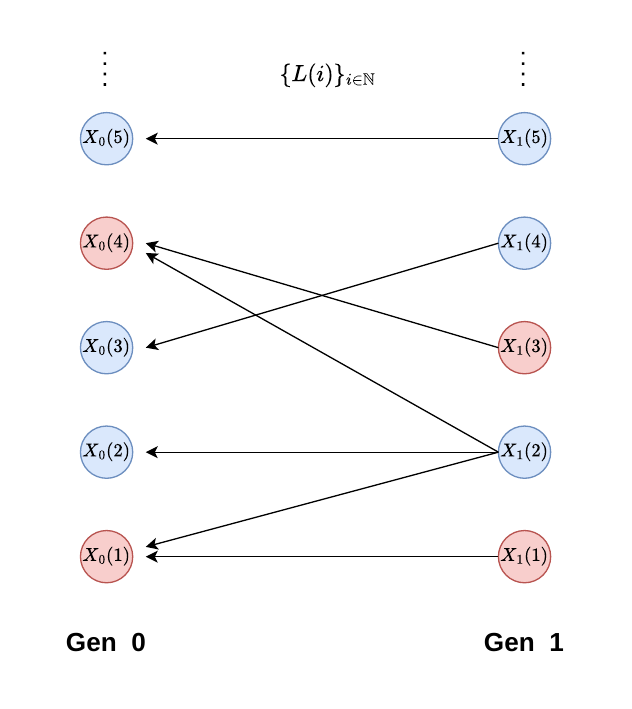}
    \caption{Illustration of the propagation of types from an initial generation $0$ to the next generation $1$, induced by a realization of the random mapping $L$ and formula (\ref{eq:transmission}). The color blue represents type $0$ while the color red represents type $1$. The former is the {strong} type: if there exists $j\in L(i)$ such that $X_0(j)=0$, then $X_1(i)=0$. This is the case in particular for $X_1(2)$. }
    \label{fig:rougeetbleu}
\end{figure}
This transmission of the {types} $0$ and $1$ is illustrated in Figure \ref{fig:rougeetbleu}.
We say that the random mapping $L$ from \(\mathbb{N}\) to \(\mathcal P(\mathbb{N})\)  {propagates exchangeability} if whenever $\{X_0{(i)}\}_{i}$ is exchangeable, then $\{X_1{(i)}\}_{i }$ is also exchangeable.
By extension, one can also apply this terminology directly to the sequence $\{L(i)\}_{i}$. It is clear that if $\{L(i)\}_{i}$ is exchangeable, then $\{L(i)\}_{i}$ propagates exchangeability, but the converse is not true. 
The trivial set $\{\{1\},\{2\},...\}$ obtained when $L(i)=\{i\}\text{ for any } i\in \N $ is obviously not exchangeable although it propagates exchangeability.
  It is important to understand that exchangeability is involved at two different levels, that of the sequences $X_0$ and $X_1$, and that of the mapping $L$, which shapes the links between $X_1$ and $X_0$. 

We want to establish a simple property ensuring the propagation of exchangeability.
First, we naturally extend $L$ to a mapping of $\p(\N)$ to itself by denoting,
for each \(S \subset \mathbb{N}\), \(L(S) = \bigcup_{i \in S} L(i)\). 
We may still use the notation $L(i)=L(\{i\})$ without ambiguity. 

We say that $L$ is {label-forgetful} if  \(|L(S)|\stackrel{d}{=}|L(S')|\) for all samples $S,S'\subset \N$ such that \(|S| = |S'|\).
This property is crucial in our framework.

\begin{theorem}
   The mapping \( L \) propagates exchangeability if and only if it is label-forgetful.
\end{theorem}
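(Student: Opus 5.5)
The proof rests on one identity for the probability that a whole sample is of type $1$. For a $\{0,1\}$-valued sequence, exchangeability is equivalent to the requirement that $\P(X(i)=1 \ \forall i\in S)$ depend on $S$ only through $|S|$. Indeed, by inclusion–exclusion, every cylinder probability $\P(X(i)=1, i\in A;\ X(j)=0, j\in B)$ is an alternating sum of probabilities of the form $\P(X(i)=1\ \forall i\in A\cup C)$ with $C\subset B$. Each such term then depends only on $|A|,|B|$, so the cylinder probabilities are invariant under finite permutations.

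By \eqref{eq:transmission}, $X_1(i)=1$ for all $i\in S$ if and only if $X_0(j)=1$ for all $j\in L(S)$. Conditioning on $L$, which is independent of $X_0$, and using exchangeability of $X_0$ gives
\begin{equation*}
\P\bigl(X_1(i)=1\ \forall i\in S\bigr)=\E\bigl[p_{|L(S)|}\bigr],\qquad p_k:=\P\bigl(X_0(1)=\dots=X_0(k)=1\bigr).
\end{equation*}
Here one must deal with $|L(S)|=\infty$. Set $p_\infty:=\lim_k p_k=\P(X_0(j)=1\ \forall j)$. This value is the same for every infinite index set, by exchangeability and monotone limits.

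\textbf{Sufficiency.} If $L$ is label-forgetful, then $|L(S)|$ has a law depending only on $|S|$. Hence $\E[p_{|L(S)|}]$ depends only on $|S|$, and the criterion above shows that $X_1$ is exchangeable.

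\textbf{Necessity.} Assume $L$ propagates exchangeability, and fix $S,S'$ with $|S|=|S'|=n$. Take $X_0$ i.i.d. Bernoulli$(x)$, so that $p_k=x^k$ (with $x^\infty=0$ for $x<1$). Exchangeability of $X_1$ gives
\begin{equation*}
\E\bigl[x^{|L(S)|}\bigr]=\E\bigl[x^{|L(S')|}\bigr]\qquad\text{for all } x\in[0,1).
\end{equation*}
These are the generating functions of $|L(S)|$ and $|L(S')|$ restricted to finite values. The two functions agree on $[0,1)$, so by analyticity the laws agree on $\N$. Since both variables are $\N\cup\{\infty\}$-valued, the masses at $\infty$ also agree, and therefore $|L(S)|\stackrel{d}{=}|L(S')|$.

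The step needing most care is the reduction lemma, in particular checking that the inclusion–exclusion argument applies to finite-dimensional cylinders. A secondary point is the treatment of possibly infinite $L(S)$ together with the choice of test sequences, for which i.i.d. Bernoulli sequences suffice.
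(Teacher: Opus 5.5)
Your proof is correct and follows essentially the same route as the paper: inclusion--exclusion reduces exchangeability of $X_1$ to the all-ones probabilities $\P(X_1(i)=1\ \forall i\in S)=\E[p_{|L(S)|}]$, and label-forgetfulness makes these depend only on $|S|$. For necessity you are more careful than the paper, which only asserts that differing laws of $|L(S)|$ and $|L(S')|$ give differing all-ones probabilities; you justify this with i.i.d.\ Bernoulli$(x)$ test sequences and uniqueness of generating functions, and you also handle the case $|L(S)|=\infty$.
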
\label{forgetful}
\begin{proof}
Fix $m\in\N$. Let $n_1,\dots,n_m\in\N$, $v_1,\dots,v_m\in \{0,1\}$ and assume that $L$ is label-forgetful. 
Denote by  $Z$ the set of indices $j$ such that  $v_j = 1$, and $Z^c$ its complement in $\{1,...,m\}$.
  Then,
\begin{center}
\begin{math}
\begin{aligned}
    \mathbb{P}\big(X_{1}{(n_1)}=v_1,\dots,X_{1}({n_m})=v_m\big) 
    & = \mathbb{E}\big[\prod_{ j=1}^ m\mathbb{1}_{\{X_{1}({n_j})=v_{j}\}} \big] \\
    & = \mathbb{E}\big[\prod\limits_{j\in Z} X_{1}({n_j})\prod\limits_{j\in Z^c}(1-X_{1}({n_j}))\big] \\
    & = \displaystyle\sum_{G \subset Z^c} (-1)^{|G|} \mathbb{E}\big[\prod\limits_{j\in Z}X_{1}({n_j})\prod\limits_{j\in G} X_{1}({n_j}) ] \\
    & = \displaystyle \sum_{G \subset Z^c} (-1)^{|G|} \mathbb{P}(\underset{j \in Z\cup G}{\inf}X_{1}({n_j})=1 )\\
    & = \displaystyle \sum_{G \subset Z^c} (-1)^{|G|} \mathbb{P}(\underset{i\in L({Z\cup G})}{\inf}X_{0}({i})=1  )\\
    & = \displaystyle \sum_{G \subset Z^c} (-1)^{|G|} \mathbb{P}(\underset{i\in L\big({\big[|Z\cup G|\big]}\big)}{\inf}X_{0}({i})=1 ).
\end{aligned}
\end{math}
\end{center}
If $\{X_0{(i)}\}_{i}$ is exchangeable, the last quantity depends only on $m$ and $(v_1,\dots,v_m)$ and thus $(X_{1}({n_1}),\dots,X_{1}({n_m}))$ is exchangeable. Since $m$ and $n_1,\dots,n_m$ are arbitrary, the sequence $\{X_1{(i)}\}_{i}$ is also exchangeable.

Reciprocally, assume that there are two subsets $S, S'$  of the same size and such that $|L(S)|$ and $|L(S')|$
differ in distribution. 
Then $\mathbb{P}(X_{1}({i})=1,\text{for all } {i \in S}) $ and $\mathbb{P}(X_{1}({i})=1,\text{for all } {i \in S'}) $  also differ, which prevents the exchangeability of the sequence $\{X_{0}({i})\}_{i} $.
\end{proof}

Label-forgetfulness is therefore a necessary and sufficient condition that is often simple to check. In particular, this is the case for the following basic examples.

\begin{example}[Branching]\label{ex:branching}
Let $\{\xi_i\}_{i}$ be a sequence of i.i.d. $\N$-valued random variables. 
Define $\{L(i)\}_{i}$ by the rule: $j\in L(i)$ if and only if $\sum_{l=1}^{i-1}\xi_l<j\leq \sum_{l=1}^i \xi_l$. 
\end{example}
This example is related to the Skeleton Chain of the dual of the Wright-Fisher diffusion with selection in a random environment \cite{BCM}, which was introduced in \cite{GSW25}.

\begin{example}[Discrete lookdown]\label{ex:coal}
The principle is to draw a group of individuals which all select the smallest element of this group, while the remaining select the smallest {available} element in ascending order. 
Rigorously, let $\{B_i\}_{i}$ be a sequence of independent Bernoulli random variables with parameter $y\in(0,1)$. Let $R=\inf\{i:B_i=1\}$. 
Each level sequentially picks a unique integer, starting from level 1, with the following rule:
\[
L(i) = \begin{cases} R & \text{if } B_i = 1 \\ i+\mathbb{1}_{i<R}-\sum_{l=1}^iB_l & \text{if } B_i = 0 \end{cases}
\]

\end{example} 
This example can be thought of as the skeleton chain of the classical modified lookdown construction by Donnelly and Kurtz \cite{DK99} and its multiple merger version \cite{Seven05} for the case $\Lambda = \delta_y$.

\begin{example}[Mixture of label-forgetful mappings]\label{ex:mix}
Consider two label-forgetful mappings ${L^1}$ and ${L^2}$ and an independent Bernoulli random variable $Y$.
Then, the mapping $Y{L^1}+(1-Y) L^2$ is label-forgetful.
\end{example}

For applications in population genetics, we  refer to $X_k{(i)}$ as the {type} of the $i$-th individual in generation $k$. 
On the other hand, the random mapping $L$ assigns to each individual in a given generation a list of {potential ancestors} one generation earlier.
Label-forgetfulness ensures that only the size of a sample affects the number of their potential ancestors, not the particular labels that compose it. As mentioned in the proof of Theorem \ref{forgetful}, if some individuals could bias the size of their ancestry, this would naturally prevent the exchangeability of the types. In the next section, we build and study a whole population process based on a label-forgetful mapping.

\subsection{The de Finetti process and duality}


We assume throughout this section that $L$ propagates exchangeability. Successive and independent draws of $L$-distributed random mappings allow us to build a genealogical graph  connecting each generation to the preceding one by associating each individual (and thus each sample) to a list of potential ancestors. Once this graph is fixed, we let the types spread from the initial generation. It is crucial in this regard that the construction of the graph is independent of the assignment of types, which is done at generation $0$.   

 More precisely, assume that \( \{X_0{(i)}\}_{i} \) is an exchangeable sequence with values in \(\{0,1\} \) such that $\P(X_0{(i)}=1)=x\in(0,1)$ for any $i\in\N$. Given \(\{X_{k-1}(i)\}_{i}\), define \(\{X_{k}(i)\}_{i}\) by the rule 
 \begin{equation}
\label{X_forward} 
X_k{(i)} = \inf_{j \in L_{k}(i)} X_{k-1}(j)
 \end{equation} where \(\{L_{k}\}_{k}\) are i.i.d. copies of $L$, and $L_k$ is independent of \(\{X_{k-1}(i)\}_{i}\). 
 We are now in position to study two processes obtained by reading the resulting graph in both directions, a forward-in-time process monitoring the types and a backward-in-time process following the size of ancestral families. An illustration of the process is given in Figure~\ref{fig:fullpro}.

\begin{figure}
    \centering
    \includegraphics[width=0.9\linewidth]{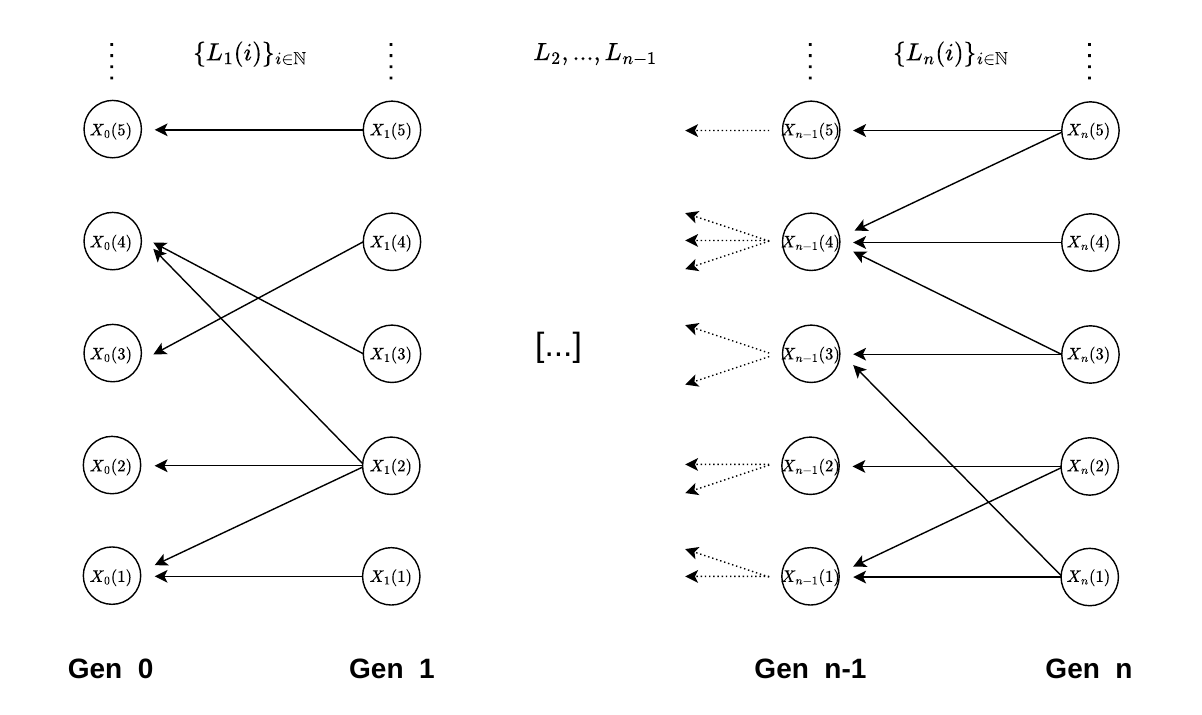
    }
    \caption{Illustration of the graph corresponding to the genealogical tree induced by the realization of i.i.d. random mappings $\{L_k\}_k$. The construction of the graph is in particular independent of the assignment of types at generation $0$, types that later spread through the graph based on formula ($\ref{X_forward}$).}
    \label{fig:fullpro}
\end{figure}

The propagation of exchangeability of $L$ implies that for every $k \in \N$, the sequence $\{X_{k}(i)\}_{i}$ is exchangeable. 
Through de Finetti's theorem, there exists for every $k$ a random variable  $\Theta_k := \lim_{p \to \infty} \frac{1}{p} \sum_{i = 1}^pX_k{(i)}$ corresponding to the empirical frequency of type $1$ at generation $k$. 
Moreover, conditionally on $\Theta_k$, the $X_k{(i)}$'s are independent and distributed according to a Bernoulli law of parameter $\Theta_k$. We have, for any $i,k\in\N$,  $$ \E[X_k{(i)}|\Theta_k]=\P(X_k{(i)}=1|\Theta_k)=\Theta_k.$$

The construction also implies that $\{\Theta_k\}_{k}$ is a $[0,1]$-valued Markov chain. Indeed, let $k,n\in\N$ and denote $[n]=\{1,\dots,n\}$. We have that 

   \[
\begin{aligned}
\mathbb{E}\big[ (\Theta_{k+1})^n \big] & = \mathbb{E}\bigg[ \mathbb{E}\big[ \prod_{i=1}^{n}X_{k+1}(i) \big| \Theta_{k+1} \big] \bigg] \\ & = \mathbb{E}\bigg[ \prod_{i=1}^{n}X_{k+1}(i)  \bigg] \\
& = \mathbb{E}\bigg[ \prod_{i=1}^{|L_{k+1}([n])|} X_k(i)\bigg] \\ 
& = \mathbb{E}\bigg[ (\Theta_k)^{|L_{k+1}([n])|}\bigg].
\end{aligned}
\]
 As $\Theta_k$ is $[0,1]$-valued, its law is fully determined by its moments. 
 Therefore, since $\Theta_k$ and $L_{k+1}$ are independent, the conditional distribution of $\Theta_{k+1}$ with respect to $\mathcal{F}_k:=\sigma(\Theta_l,0\leq l\leq k)$ depends only on $\Theta_k$.
 We call the Markov process $\{\Theta_k\}_k$ the \textit{de Finetti process}.

{We now turn to the backward process induced by our framework.}
Let us define the $\N$-valued Markov chain $\{A_k\}_{k}$  with the following transitions. For any $n,m\in\N$, 
\begin{equation}
\label{block_counting}
\P(A_1=m|A_0=n)=\P(|L([n])| = m).    
\end{equation} 
It is called the {block-counting} process of the backward-in-time genealogical process induced by $L$. 
Indeed, if $A_0=n$, then $A_k$ is equal in distribution to the number of potential ancestors of a sample of size $n$, $k$ generations in the past, namely after the composition of $k$ independent $L$-distributed random variables. 
Through label-forgetfulness, we identify the law of $A_k$ started at $n$ with that of $|L_{1}\circ\cdots\circ L_{k}([n])|$.

\begin{theorem}[Moment duality]\label{thduality}
   Assuming L is label-forgetful, the forward and backward processes are moment dual: \begin{equation}
    \label{duality}
\mathbb{E}_x\big[\Theta_k^n\big]=\mathbb{E}_n\big[x^{A_k}\big]  
   \end{equation}
   where $\mathbb{E}_x$ stands for the expectation associated to the process $\{\Theta_k\}_k$ started at $x$, and $\mathbb{E}_n$ stands for the expectation associated to the process $\{A_k\}_k$ started at $n$.
\end{theorem}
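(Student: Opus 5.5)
We will prove \eqref{duality} by induction on $k$, unrolling the genealogical graph backwards one generation at a time. The argument repeats the computation giving the Markov property of $\{\Theta_k\}_k$, now applied to the composed mapping $L_1\circ\cdots\circ L_k$. Take the initial sequence $\{X_0(i)\}_i$ to be i.i.d.\ Bernoulli($x$), so that $\Theta_0=x$ almost surely. By de Finetti, $\E_x[\Theta_k^n]=\E[\prod_{i=1}^n X_k(i)]$.

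First, iterating \eqref{X_forward}, we have $X_k(i)=\inf_{j\in L_1\circ\cdots\circ L_k(\{i\})}X_0(j)$, where $L(S)$ is extended to sets as in the previous section. Consequently
\[
\prod_{i=1}^n X_k(i)=\inf_{j\in L_1\circ\cdots\circ L_k([n])}X_0(j),
\]
because both sides equal $1$ exactly when every potential ancestor of the sample $[n]$ in generation $0$ has type $1$. The graph $\{L_l\}_l$ is independent of $\{X_0(i)\}_i$, and the $X_0(i)$ are i.i.d.\ Bernoulli($x$). Conditioning on the graph therefore gives
\[
\E_x[\Theta_k^n]=\E\big[x^{|L_1\circ\cdots\circ L_k([n])|}\big].
\]
This holds for any exchangeable initial law with $\Theta_0=x$, since only the fact that a finite set of size $m$ has all ones with probability $x^m$ is used.

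It remains to identify the law of $|L_1\circ\cdots\circ L_k([n])|$ with that of $A_k$ under $\P_n$. This is the only non-trivial point, and the one where label-forgetfulness (Theorem~\ref{forgetful}) enters. We show by induction on $k$ that $N_k:=|L_1\circ\cdots\circ L_k([n])|$ is a Markov chain in $k$ with the transitions \eqref{block_counting}. The case $k=1$ is \eqref{block_counting} itself. For the inductive step, set $S_k=L_{k+1}\circ\cdots$. It is more convenient to peel off the innermost map: write $L_1\circ\cdots\circ L_{k+1}([n]) = (L_1\circ\cdots\circ L_k)(S)$ with $S=L_{k+1}([n])$, which is independent of $(L_1,\dots,L_k)$. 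Conditionally on $S$ with $|S|=m$, we need $|L_1\circ\cdots\circ L_k(S)|\stackrel{d}{=}|L_1\circ\cdots\circ L_k([m])|$. So the key lemma is that the composition of independent label-forgetful mappings is label-forgetful.

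We prove this lemma by induction: $|L_1\circ L'(S)|$, with $L'=L_2\circ\cdots\circ L_k$, is obtained by conditioning on $L'(S)$. By induction, $|L'(S)|$ depends only on $|S|$. Given $L'(S)=T$, $|L_1(T)|$ depends only on $|T|$, by independence and the label-forgetfulness of $L_1$. Hence the law of $|L_1\circ L'(S)|$ is a mixture depending only on $|S|$.

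With the lemma in hand, conditioning on the first backward step yields
\[
\P(N_{k+1}=j)=\sum_m \P(|L([n])|=m)\,\P(N_k^{(m)}=j).
\]
This is exactly the Chapman--Kolmogorov recursion for $A$ started at $n$, decomposed at its first jump. Hence $N_k\stackrel{d}{=}A_k$ under $\P_n$, and $\E_x[\Theta_k^n]=\E_n[x^{A_k}]$.
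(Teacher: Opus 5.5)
Your proof is correct and uses the same sampling-duality computation as the paper: $\P_x(X_k(i)=1,\ i\in[n])$ is evaluated once through de Finetti's theorem and once by conditioning on the genealogical graph. The difference is the identification of the law of $|L_1\circ\cdots\circ L_k([n])|$ with that of $A_k$ under $\P_n$. The paper only asserts this just before the theorem (``through label-forgetfulness, we identify\dots''). You prove it, via the lemma that a composition of independent label-forgetful mappings is label-forgetful, followed by a first-step Chapman--Kolmogorov induction over all initial sizes at once. This completes the argument and shows, as a by-product, that $L_1\circ\cdots\circ L_k$ itself propagates exchangeability. The detour through the innermost map is not needed, though. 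Peeling off the outermost map $L_1$ gives $\P(N^{(n)}_{k+1}=j)=\sum_m\P(N^{(n)}_k=m)\,\P(|L([m])|=j)$ directly. It uses only the label-forgetfulness of $L_1$ and $(L_2,\dots,L_{k+1})\stackrel{d}{=}(L_1,\dots,L_k)$, and it is exactly the computation in your lemma's proof.

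One sentence of your write-up is false as stated: $k\mapsto N_k=|L_1\circ\cdots\circ L_k([n])|$ is in general not a Markov chain with transitions \eqref{block_counting}. Take the branching mapping of Example~\ref{ex:branching} with $\xi$ uniform on $\{1,2\}$ and $n=1$. Then $\P(N_1=1,N_2=3)=1/8$, while the block-counting chain cannot go $1\to1\to3$. What your induction actually proves is the one-dimensional identity $N^{(n)}_k\stackrel{d}{=}A_k$ under $\P_n$, and that is all the duality needs. The process that is Markov with these transitions is the backward reading with a fixed top generation, $|L_{K-k+1}\circ\cdots\circ L_K([n])|$, as in Section~\ref{discrete}. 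The leftover fragment ``set $S_k=L_{k+1}\circ\cdots$'' should also be deleted.
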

\begin{proof}
   The proof technique is known as sampling duality, and is based on studying the relationship between the types of a sample of individuals in generation $k$, which is related to $\Theta_k$, and their  $A_k$ ancestors in generation $0$. More precisely, we look at the probability that the first $n$ individuals in generation $k$ (or equivalently any sample of size $n$ in generation $k$) all have the type $1$. This is given by
\[
    \P_{x}(X_k(i)=1, i\in[n])   = \mathbb{E}_x\big[\Theta_k^n\big] \\
\]
where we use the conditional independence of the $X_k(i)$'s on $\Theta_k$. 

Now, summing over the number of potential ancestors at generation $0$ of these first $n$ individuals, we obtain  
\[
\begin{aligned}
    \P_{x}(X_k(i)=1, i\in[n]) 
    & = \displaystyle \sum_{m\geq1}\mathbb{P}_n(A_k=m)\mathbb{P}_x(X_0(j)=1, j\in[m]) \\
    & = \displaystyle \sum_{m\geq1}\mathbb{P}_n(A_k=m)x^m \\
    & = \mathbb{E}_n\big[x^{A_k} \big].
\end{aligned}
\]
\end{proof}

 Formula \eqref{duality} on the moments of $\Theta_k$ for any $k$ characterizes the semi-group of the process $\{\Theta_k\}_k$. In some cases, the transitions of one of the two processes are simpler to study and allow us to find those of the dual. 

\begin{remark}
One could also use this methodology with time-inhomogeneous processes. As long as the mappings $L_1,L_2,\dots$ are independent and label-forgetful, the results remain correct.
\end{remark}

Moment duality relationships are often derived in continuous time, using the generator method. The next result shows how to bridge the gap from our discrete construction to those continuous-time processes.
The aim is to show that the de Finetti process $\{\Theta_k\}_k$ with a suitable time scaling can approximate classical forward-in-time processes.

\begin{proposition}[Approximation of forward processes]
\label{theo_cv}
Let $\{A^N\}_{N}$ be a sequence of block-counting processes associated with label-forgetful mappings $\{L^N\}_{N}$. 
For each $N\geq1$, define $\Theta^N$ as the de Finetti process dual to $A^N$ from Theorem \ref{thduality}.  Assume that there exists a continuous-time Markov process $\{A_t\}_{t}$ taking values in $\N$ such that 
\begin{enumerate}
    \item[i)] $\{A_t\}_{t}$ is moment dual to a $[0,1]$-valued Feller Markov process $\{W_t\}_{t}$. 
    \item[ii)] There exists a diverging sequence $\{a_N\}_N$ such that for any fixed $t\in[0,T]$,
    $A^N_{ \lfloor a_Nt\rfloor }$ converges in distribution towards $A_t$
    whenever $A^N_0\overset{a.s.}{=}A_0$.
\end{enumerate}
Then, defining $ W^N_t=\Theta^N_{\lfloor a_N t\rfloor}$, the sequence $\{W^N\}_{N}$ weakly converges towards $W$ in $\D([0,T],[0,1])$.
\end{proposition}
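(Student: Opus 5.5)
We first obtain convergence of the finite-dimensional distributions of $W^N$ through moment duality, and then upgrade this to weak convergence in $\D([0,T],[0,1])$. We assume throughout that $W^N_0=\Theta^N_0=x=W_0$.

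\textbf{One-dimensional marginals.} Fix $t\in[0,T]$ and $n\in\N$. By Theorem~\ref{thduality} applied to $L^N$,
\[
\E_x\big[(W^N_t)^n\big]=\E_x\big[(\Theta^N_{\lfloor a_Nt\rfloor})^n\big]=\E_n\big[x^{A^N_{\lfloor a_N t\rfloor}}\big].
\]
Since $m\mapsto x^m$ is bounded and continuous on $\N$, assumption ii) gives convergence of the right-hand side to $\E_n[x^{A_t}]$. By assumption i) this equals $\E_x[W_t^n]$. Laws on $[0,1]$ are determined by their moments, and convergence of all moments implies weak convergence there. Hence $W^N_t\to W_t$ in distribution, for every starting point $x\in[0,1]$.

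\textbf{Finite-dimensional distributions.} We use the Markov property of $\Theta^N$. The duality argument in fact gives more than the marginals: for every $y\in[0,1]$, the one-step kernel $P_N^{(k)}(y,\cdot)$ of $\Theta^N$ over $k$ generations satisfies $\int z^n P_N^{(k)}(y,dz)=\E_n[y^{A^N_k}]$. Polynomials are dense in $C([0,1])$, so it suffices to control $y\mapsto \E_n[y^{A^N_{\lfloor a_N s\rfloor}}]-\E_n[y^{A_s}]$ uniformly in $y\in[0,1]$.

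These are probability generating functions of $\N$-valued variables converging in law, and by Scheffé they converge in total variation. The difference is therefore bounded by $d_{TV}\to 0$, uniformly in $y$. Since $W$ is Feller, its semigroup maps continuous functions to continuous functions. A standard induction over times $t_1<\dots<t_k$ then gives convergence of the finite-dimensional distributions. At each step we write $\E[\prod_i f_i(W^N_{t_i})]$ via the kernels and replace them one at a time, using the uniform convergence above together with the continuity supplied by the Feller property.

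\textbf{Tightness and identification.} This is the main obstacle, since $W^N$ is a piecewise constant process in discrete time. We would apply Aldous' criterion, or Kurtz's criterion via second moments of increments. Using the Markov property and duality with $n=1,2$,
\[
\E\big[(W^N_{\tau+\delta}-W^N_\tau)^2\mid \mathcal F_\tau\big]
= g_2^N(\delta,W^N_\tau)-2W^N_\tau g_1^N(\delta,W^N_\tau)+(W^N_\tau)^2,
\]
where $g_n^N(\delta,y)=\E_n[y^{A^N_{\lfloor a_N\delta\rfloor}}]$. By the uniform convergence above, this is asymptotically $\E_y[(W_\delta-y)^2]$ evaluated at $y=W^N_\tau$. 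This quantity tends to $0$ as $\delta\to0$ uniformly in $y$, by the Feller property (strong continuity of the semigroup applied to $z\mapsto (z-y)^2$, with uniformity in $y$ by compactness). Some care is needed with the rounding $\lfloor a_N(\tau+\delta)\rfloor-\lfloor a_N\tau\rfloor$, which we handle by monotone bounds over $\delta'\le 2\delta$. This yields tightness in $\D([0,T],[0,1])$. Combined with the finite-dimensional convergence, the limit is identified as $W$. Alternatively, one could invoke the convergence theorem for discrete-time Markov chains to Feller processes (Ethier--Kurtz, Thm.~4.2.6), after checking convergence of the discrete semigroups on polynomials, which is exactly what the uniform-in-$y$ duality estimate provides.
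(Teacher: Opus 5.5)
The alternative in your last sentence is exactly the paper's proof. The paper invokes the discrete-time Trotter--Kurtz theorem (Kallenberg, Thms.~17.25 and 17.28, the analogue of the Ethier--Kurtz result you cite). Everything then reduces to showing $\sup_{x}|\mathbb{E}_x[(\Theta^N_{\lfloor a_Nt\rfloor})^p]-\mathbb{E}_x[W_t^p]|\to0$ for each fixed $t$ and $p$, with pointwise convergence coming from duality and ii). The only difference is how you get uniformity in $x$. The paper uses that $x\mapsto\mathbb{E}_p[x^{A^N_{\lfloor a_Nt\rfloor}}]$ is nondecreasing with a continuous limit; strictly this is P\'olya's form of Dini's theorem, since the monotonicity is in $x$, not in $N$. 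Your Scheff\'e bound $\sup_{y\in[0,1]}|\mathbb{E}_p[y^{A^N_{\lfloor a_Nt\rfloor}}]-\mathbb{E}_p[y^{A_t}]|\le\sum_m|\mathbb{P}_p(A^N_{\lfloor a_Nt\rfloor}=m)-\mathbb{P}_p(A_t=m)|\to0$ gives this directly and is cleaner. Combine it with density of polynomials, contractivity, and the fact that the one-step operator $U_N$ of $\Theta^N$ maps $C([0,1])$ into itself (same polynomial identity plus density). That makes the semigroup route complete.

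The finite-dimensional-distributions plus tightness route you lead with does not go through as written, and it fails exactly where the abstract theorem does real work. Assumption ii) only controls $U_N^{\lfloor a_Ns\rfloor}$ at each \emph{fixed} $s$.
\begin{itemize}
\item For finite-dimensional distributions, the number of steps between $t_i$ and $t_{i+1}$ is $\lfloor a_Nt_{i+1}\rfloor-\lfloor a_Nt_i\rfloor$, which can equal $\lfloor a_N(t_{i+1}-t_i)\rfloor+1$. Absorbing the extra step needs $\|U_Ng-g\|\to0$, and none of your estimates gives this.
\item For Aldous' or Kurtz's criterion you need the second-moment bound uniformly over lags $\theta\in[0,\delta]$ (equivalently along $\delta_N\to0$), not just at fixed $\delta$. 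There is no monotonicity in $\delta$ of $\mathbb{E}_y[(W^N_\delta-y)^2]$ to justify your ``monotone bounds over $\delta'\le2\delta$''.
\end{itemize}
Both points can be repaired by a time-averaging (resolvent) argument. With $g_N=\eta^{-1}\int_0^\eta U_N^{\lfloor a_Nt\rfloor}f\,dt$ one has $\|U_N^kg_N-g_N\|\le 2k\|f\|/(a_N\eta)$. Also $g_N\to\eta^{-1}\int_0^\eta T_tf\,dt$, which is close to $f$ by strong continuity of the Feller semigroup. But this is precisely how the semigroup convergence theorem upgrades fixed-$t$ convergence to uniform convergence on bounded time intervals and to convergence in $\mathbb{D}$. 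So drop the hand-made sketch and present the semigroup route as the proof.
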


\begin{proof}

Let us start considering the convergence of semigroups.
Write $C_0$ for the space of continuous bounded functions on $[0,1]$ equipped with the uniform norm, and $\mathcal{L}(C_0)$ for the set of linear operators from $C_0$ to itself. 
Let $\{T_t\}_t$ be the semigroup of $W$, and define, for $N\in\N$ and $t\in[0,T]$, $T_{N,t}\in \mathcal{L}(C_0)$ such that for any $f\in C_0$, $T_{N,t}f(x)=\mathbb{E}_x[f({W}^N_{ t}$)]. 
Following Theorems 17.25 and 17.28 in \cite{Kal02}, the targeted convergence results from the existence of the limiting process, which is assumed, and the strong convergence of $T_{N,t}$ to $T_t$ in $\mathcal{L}(C_0)$ for every $t\in[0,T]$, which we will show, and is equivalent to the convergence in $C_0$ of $T_{N,t}f$ to $T_tf$ for any $f$. 
Plainly, it writes
\begin{equation}
\label{onveut}
    \underset{x\in[0,1]}{\sup} \big|T_{N,t}f(x)-T_tf(x) \big| \underset{N\to\infty}{\to} 0.
\end{equation}
Actually, since all functions of $C_0$ can be uniformly approximated by polynomials, it is enough to show the uniform convergence for functions of the form $f_p(x)=x^p$.

{Now we want to pass from simple to uniform convergence}.
Let $x\in[0,1]$, $t\in[0,T]$ and $p\in\N$. Using the duality results induced by Theorem \ref{thduality} and assumption $i)$, as well as the convergence in $ii)$, we have that 
\begin{equation}
\label{conv}
\mathbb{E}_x[f_p({W}^N_{ t})]=\mathbb{E}_x[f_p({\Theta}^N_{ {\lfloor a_N t\rfloor}})]=\mathbb{E}_p[x^{A^N_{\lfloor a_N t\rfloor}}]\underset{N\to\infty}{\to}\mathbb{E}_p[x^{A_t}]=\mathbb{E}_x[W_t^p].
\end{equation}

We can view $\eqref{conv}$ as a pointwise convergence of $T_{N,t}f_p$ to $T_tf_p$ on $[0,1]$. Furthermore, since $T_{N,t}f_p$ is increasing on the compact $[0,1]$, and $T_tf_p$ is continuous as the probability generating function of $A_t$, this pointwise convergence also implies uniform convergence on $[0,1]$ through Dini's theorem and \eqref{onveut} follows. 
\end{proof}

\section{Discrete lookdown constructions}
\label{Xi}

The fundamental duality result in population genetics is the moment duality established between the Wright-Fisher diffusion and Kingman's coalescent block-counting process \cite{EK86}, providing a first example of a relation between forward- and backward-in-time processes in random evolutionary models. 
This duality relation was later generalized at a measure-valued ({resp.} partition-valued) level by \cite{Per92}, linking the Fleming-Viot process with Kingman's coalescent.

In \cite{DK96}, a countable representation of the Fleming-Viot process and the Wright-Fisher diffusion was obtained, giving an intuitive and graphical construction of those limit processes. Roughly speaking, particles of this infinite system reproduce after an exponential time and their child kills and replaces an already existing particle in the infinite system. A smart relabeling framework allows to consider infinitely many individuals at once. 
  This countable representation, known as the lookdown construction, provides a graphical framework where both the diffusion and the coalescent can be built on.

 The lookdown construction was then modified in \cite{DK99}, providing yet another technical breakthrough. 
    In this version, particles are labeled by a level that informs about their time to remain in the system. Indeed, they can be pushed upwards when particles  below them reproduce.
    The construction is obtained through an exchangeability argument, which seems counterintuitive at first sight, since accounting for the levels may suggest that particle types are not exchangeable.
     The modified lookdown construction is important because it establishes a ranking between particles at any time.
     Among many applications, it is possible to study fixation lines \cite{H15,PW}, which track genealogical relations starting from the common ancestor.
Other  applications such as evolving coalescents \cite{PW} or the evolving size of ancestral families \cite{DDS} were later developed.
This modified lookdown construction was generalized to a wider class of Fleming-Viot processes and coalescents in \cite{Seven05,Five09}. 
Particle representations were also developed for more sophisticated models \cite{EK, KR} considering
evolving real-valued levels or variable size populations.

{In \cite{Seven05, Five09, DK99}, the models under consideration are neutral in the sense of evolutionary biology: no type carries a selective advantage over the others; yet selection is expected to play a central role in realistic modeling. 
The first duality result incorporating selection is established between the Ancestral Selection Graph (ASG), a branching-coalescing process, and the Wright-Fisher diffusion with selection  \cite{KN97}. 
However, allowing types to have different fitness and priorities significantly complexifies the exchangeability arguments lying in lookdown-like constructions, since the levels can no longer be assigned independently of the type carried by each particle. 
A construction accommodating the Wright-Fisher diffusion with weak selection was nonetheless achieved in \cite{BP15}.}

In this section, we use the machinery developed in section \ref{partI} to shed a new light on how the exchangeability can be preserved in lookdown-like constructions, even in cases where reproduction events seem to highly depend on the levels of particles. To do so, we build a discrete lookdown model, which we believe is more transparent than its continuous counterpart because the exchangeability arises, at a discrete-time level, as a consequence of the propagation of exchangeability. 
Aiming also at incorporating various selection mechanisms, we provide in Theorem~\ref{lazy} a Poissonian construction of a {lookdown graph} on which can be read both a backward-in-time branching-coalescing process, and its moment dual forward-in-time jump-diffusion in a general case involving simultaneous multiple mergers, binary mergers and branching events. 
This allows us to create de Finetti processes approximating the Wright-Fisher diffusion with jumps, weak selection and frequency-dependent selection, thus extending the work of \cite{BP15}.

For a mapping $L$, define $L^{-1}(i)=\{j,i\in L(j)\}$ for any $i$. 
We say that $L$ satisfies a lookdown principle if for all $i$, $L(i)\neq\emptyset$, $L^{-1}(i)\neq\emptyset$ and whenever $i_1<i_2$, then $\min L^{-1}(i_1)\leq \min L^{-1}(i_2)$.

\begin{theorem}[The lookdown probability space]
\label{lazy}
Set $\Delta = \{\vect{s}=(s_1,s_2,...):s_1\geq s_2\geq...\geq0,\displaystyle\sum s_l\leq1 \}$  for the infinite-dimensional simplex and $C=\{\vect{i}=(i_1,i_2)\in\mathbb{N}^2,i_1<i_2\}$ for the set of ordered pairs of integers. Denote also by $\vect{p}:=\{p_j\}_{j}$ a probability measure on $\N$ with finite expectation and probability generating function $\Phi$.
There exists a Poisson point process
\[
\Pi \subset \mathbb{R}_+ \times \Delta\times [0,1]^{\N}\times C\times\N^2,
\]
  such that one can construct the following using $\Pi$.

\begin{enumerate}
\item[i)] A family of random mappings $L_t$ that satisfy a lookdown principle for any $t$
and a $\N$-valued branching--coalescing process $\{{A}_t\}_{t}$ with transition rates 
\begin{equation}\label{eq:dynA}
\begin{cases} n\to n+j & \text{at rate } np_j \\n\to n-j & \text{at rate } c\binom{n}{2}\mathbb{1}_{j=1}+q_{n,n-j} \\
\end{cases}
\end{equation}
where $j\in \mathbb{N}$ and the array $\{q_{n,m}\}_{n,m}$ for $m=1,2,...,n-1$ provides the transition rates of the block-counting process of a $\Xi$-coalescent.
\item[ii)] A $[0,1]$-valued frequency process $\{W_t\}_{t}$,
   strong solution to the stochastic differential equation
        \begin{align}\label{eq:EDSW}
           dW_t & = \sqrt{W_t(1-W_t)}dB_t + f(W_t)W_t(1-W_t)dt \notag \\ & \qquad + \int_{\Delta\times[0,1]^{\N}}  \sum_{l\ge1} s_l \left(\mathbb{1}_{\{u_l \leq W_{t^-}\}} - W_{t^-}\right) M(dt, d\vect{s}, d\vect{u}), 
        \end{align}
        where $M$ is the restriction of $\Pi$ to $\R_+\times\Delta\times[0,1]^{\N}$ and $f(x):=c(1-\Phi(x))/(1-x)$ for some constant $c>0$.  
\end{enumerate}
Moreover, the processes $\{W_t\}_{t }$ and $\{{A}_t\}_{t}$ are moment dual {and can be read as forward and backward in time processes on the same graphical construction}. 

\end{theorem}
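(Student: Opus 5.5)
We build $\Pi$ as a superposition of independent Poisson point processes, one per type of genealogical event, and construct on a common time axis the lookdown graph, the ancestral process $\{A_t\}_t$ and the frequency process $\{W_t\}_t$. The components are:
\begin{itemize}
\item[(a)] $\Xi$-events, with points $(t,\vect{s},\vect{u})$ of intensity $dt\otimes \nu(d\vect{s})\otimes d\vect{u}$, where $\nu(d\vect{s})=\Xi(d\vect{s})/\sum_l s_l^2$ on $\Delta$ and $\vect u$ is i.i.d.\ uniform. The Kingman part, i.e.\ the atom of $\Xi$ at $0$, is handled by the $C$-coordinate.
\item[(b)] Binary events, with points $(t,\vect{i})$, $\vect{i}=(i_1,i_2)\in C$, at rate $1$ for each pair (neutral lookdown) and at rate $c$ for each pair (the ``$c\binom n2$'' coalescence).
\item[(c)] Branching events, with points $(t,k,j)\in\R_+\times\N^2$ at rate $p_j$ for each level $k$: level $k$ acquires $j$ additional potential ancestors.
\end{itemize}

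The plan is to first work in a discretised setting and then invoke the results of Section~\ref{partI}. Restrict the time axis to a grid of mesh $1/N$ and to levels $\le N$, so that on each time step at most one event (a.s., up to $o(1/N)$) affects a given finite sample. Each event type then defines a random mapping $L$ of $\N$ to $\p(\N)$ of lookdown form.
\begin{itemize}
\item A $\Xi$-event colours levels by the uniforms $u_l$ (paintbox). In each family all members look down to the lowest level of that family, and the remaining levels are shifted to the smallest available labels in increasing order, exactly as in Example~\ref{ex:coal}.
\item A binary event $(i_1,i_2)$ makes $i_2$ copy $i_1$ and pushes up the levels above.
\item A selective event at $(i_1,i_2)$ gives $L(i_2)=\{i_1,i_2\}$-type sets with shifting.
\item A branching event at level $k$ inserts $j$ new potential parents. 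This follows the shift rule of Example~\ref{ex:branching}: $L(k)$ becomes a block of $j+1$ consecutive labels and the levels above are pushed up.
\end{itemize}

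The key step is to check that each mapping is label-forgetful and satisfies the lookdown principle. The lookdown principle (non-emptiness of $L(i)$ and $L^{-1}(i)$, monotone minima) follows directly from the ``smallest available label'' shifting. For label-forgetfulness, note that $|L(S)|$ is determined by how many elements of $S$ fall in each paintbox family, or in each selected pair, or on the branching level. Pairs and families are chosen uniformly by the Poisson intensities, and the total shifting preserves cardinalities. Hence the law of $|L(S)|$ depends only on $|S|$. The same computation gives the transitions~\eqref{eq:dynA}:
\begin{itemize}
\item a sample of size $n$ branches by $j$ at rate $np_j$;
\item it loses one lineage at rate $c\binom n2$ (the selective pair event read backward, counted as in \cite{BP15}), plus the neutral Kingman rate absorbed into $q$;
\item it loses lineages according to the $\Xi$-coalescent rates $q_{n,m}$ through paintbox events.
\end{itemize}
By the Remark after Theorem~\ref{thduality} (independent, time-inhomogeneous label-forgetful mappings) and Theorem~\ref{forgetful}, exchangeability propagates along the lookdown graph. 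Theorem~\ref{thduality} then gives $\E_x[\Theta^N_{k}{}^n]=\E_n[x^{A^N_k}]$ for the discrete chains.

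We then pass to the limit $N\to\infty$. On the fixed Poisson realisation, $A^N_{\lfloor Nt\rfloor}$ converges a.s.\ to $A_t$ for finite samples, since only finitely many events touch finitely many lineages in finite time; this needs a non-explosion bound using $\sum_j jp_j<\infty$. On the forward side, the generator of~\eqref{eq:EDSW} applied to $x^n$ equals the generator of $A$ applied to $n\mapsto x^n$. The diffusion term and the jump term reproduce the $\Xi$ and Kingman rates. For the drift term, note that
$$f(x)x(1-x)\,n x^{n-1}=c\,n\,x^{n}\frac{1-\Phi(x)}{1-x}(1-x)\cdot\tfrac{1}{x}\cdots,$$
which one rewrites as $\sum_j n p_j(x^{n+j}-x^n)$ up to the identity $(1-\Phi(x))/(1-x)=\sum_j p_j\sum_{r<j}x^r$. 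Matching these terms carefully, and checking the constant $c$, is the delicate bookkeeping of the proof. Since $A$ is non-explosive, the duality function class is measure-determining, so the standard generator duality (e.g.\ \cite{JK14}) makes $W$ and $A$ moment dual, and Proposition~\ref{theo_cv} identifies the limit of the de Finetti processes $\Theta^N_{\lfloor Nt\rfloor}$ with $W$. To obtain a strong solution on the same space, we write $W_t=\lim_p\frac1p\sum_{i\le p}X_t(i)$ on the lookdown graph, as in \cite{DK99,BP15}. Its semimartingale decomposition is driven by $M$, which produces the jump term pathwise, and by a Brownian motion constructed from the binary events, which produces the $dB_t$ term.

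The main obstacle is this last step. The discrete objects give moments and laws easily, but the pathwise representation of~\eqref{eq:EDSW} with the same $M$ requires the law-of-large-numbers argument over levels. I would attempt it by controlling $\frac1p\sum_{i\le p}X_t(i)$ via exchangeability and martingale estimates, uniformly in $t\in[0,T]$.
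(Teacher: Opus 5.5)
\textbf{Main gap: label-forgetfulness of your discretised maps.} The step you call key does not hold, and everything downstream depends on it.

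A binary event at a given pair gives $|\ell^K(S)|=|S|-\mathbb{1}_{\{\{i_1,i_2\}\subset S\}}$. A branching event at a given level gives $|\ell^B(S)|=|S|+j_2\mathbb{1}_{\{j_1\in S\}}$. Label-forgetfulness would therefore force $\P(\vect{i}=S)$ to be the same for every two-point set $S$ (resp.\ $\P(j_1=k)$ to be the same for every $k\in\N$). That is impossible on the infinite sets $C$ and $\N$: the Poisson intensities do not ``choose pairs uniformly''.

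Truncating to levels $\le N$ only makes the location uniform on $[N]$. For a single binary event, $|L(\{1,2\})|=1$ with probability $\binom N2^{-1}$, while $|L(\{N+1,N+2\})|=2$ almost surely. Composing all events of a step does not repair this: already for $N=2$, the sample $\{2,3\}$ needs two events to merge while $\{1,2\}$ needs one. On a fixed Poisson realisation the maps are even deterministic. The fact that at most one event touches a given finite sample per step does not help, because label-forgetfulness is a statement about all samples at once.

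Hence Theorem~\ref{forgetful} does not apply, there is no de Finetti process, and neither Theorem~\ref{thduality} nor Proposition~\ref{theo_cv} can be invoked. What actually produces label-forgetfulness is exchangeability of marks attached to \emph{every} level:
\begin{itemize}
\item the i.i.d.\ uniforms $\vect{u}$ of a paintbox for $\Xi$-atoms (Lemma~\ref{lem:LXi}; the selected atom's $\vect{u}$ is independent of the selection rule);
\item i.i.d.\ per-level offspring numbers, as in Example~\ref{ex:branching};
\item for the Kingman part, an exchangeable paintbox of all of $\N$, e.g.\ rare one-block events as in Example~\ref{ex:coal} with small $y$.
\end{itemize}
These are combined via Example~\ref{ex:mix} and Proposition~\ref{propmix}, as in Section~\ref{sec:mixture}. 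The paper's one-atom-per-window selection in Section~\ref{sec:Xi+K} meets the same obstruction for single $\ell^K$ and $\ell^B$ atoms, so it is this exchangeable-marks mechanism that really carries the argument.

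A smaller point: as written, your $\Xi$ rule sends a family to its lowest level. The paper sends it to the \emph{index} of the block after ordering blocks by their minima, precisely because the minimum rule breaks the ranking. With blocks $\{1,2\}$, $\{5,6\}$ and singletons, level $7$ would go to $4$ and $\{5,6\}$ to $5$, which violates the lookdown principle.

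\textbf{Second gap: the rate bookkeeping cannot close with your intensities.} The ``selective'' event $L(i_2)=\{i_1,i_2\}$ gives a lineage an extra potential ancestor. For the sample it is a branching, not a merger, so it cannot produce a loss term $c\binom n2$. If instead your extra rate-$c$ pair events are ordinary mergers, pairs coalesce at rate $1+c$ and the dual diffusion coefficient becomes $(1+c)W(1-W)$. Your displayed drift identity is also off. The correct computation is
\[
f(x)x(1-x)\,nx^{n-1}=c\,n\,x^{n}\big(1-\Phi(x)\big)=-c\sum_{j} np_j\big(x^{n+j}-x^{n}\big).
\]
This shows that $c$ belongs to the branching rate, as in the paper's intensity $c\sum_{j_1}\delta_{j_1}\otimes\vect{p}$ with binary events at rate $1$ per pair, so \eqref{eq:dynA} has to be read with $cnp_j$ and $\binom n2$. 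It also shows that, because type $0$ dominates under \eqref{eq:transmission}, the drift actually dual to branching is $-f(W)W(1-W)$. These points must be settled, not deferred.

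Finally, the pathwise law-of-large-numbers step you flag as the main obstacle is not how the paper proceeds. The paper identifies $W$ only in law, through Proposition~\ref{theo_cv} together with the duality and well-posedness of \eqref{eq:EDSW} taken from \cite{GCS}. That part of your plan goes beyond the paper's argument and remains unproved.
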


The transition rates of the block-counting process of the $\Xi$-coalescent can be found in  \cite{MG21}, Proposition 2.1. 
The atoms of the Poisson point process $\Pi$ defined in Theorem~\ref{lazy} correspond to different kinds of events for the backward and forward processes. 
More precisely, if $(t,\vect{s},\vect{u},\vect{i}, \vect{j})$ is a point in $\mathbb{R}_+ \times \Delta\times [0,1]^{\N}\times C\times\N^2$, then 
the sequences $\vect{s}$ and $\vect{u}$ encode simultaneous and multiple coalescence events in the backward-in-time process, while the pair vector $\vect{i}$ encodes binary coalescences, and the pair vector $\vect{j}$ encodes branching events.
For the forward-in-time process defined in \eqref{eq:EDSW}, $\vect{s}$ and $\vect{u}$ encode the jump term, while $\vect{i}$ encodes the diffusive term, and  $\vect{j}$ the drift.

 The proof of Theorem \ref{lazy} is developed in the following three sections. 
 First, we detail in section \ref{xi_event} how an element of  $\Delta\times[0,1]^\mathbb{N}$ can be transformed into an exchangeable partition inducing a classical simultaneous and multiple merger event. 
In section \ref{discrete}, we introduce the intensity measure of $\Pi$ and we use a discretization of the time and atoms of $\Pi$ restricted to $\mathbb{R}_+\times\Delta\times[0,1]^\mathbb{N}$ to build forward and backward processes converging towards the adequate continuous-time process. 
In section \ref{sec:Xi+K}, we incorporate the missing parts of the Poisson point process to recover the general case defined in Theorem~\ref{lazy}. 

The framework of the proof can be simplified thanks to an alternative technique to handle a mixture of various events, focused on label-forgetful mappings rather than atoms of a Poisson random measure.
This is developed in section~\ref{sec:mixture}.

\color{black}
\subsection{A  mapping associated to the lookdown construction}
\label{xi_event}

Recall the {infinite simplex} $\Delta$ with its origin $0=(0,\dots)$ and set for any $\vect{s}\in\Delta$, $s_0=1-\sum s_l$. 
Also consider the subset
$$\Delta^* = \{\vect{s}\in\Delta:\displaystyle\sum s_l=1 \}. $$

In the following, we detail how each element $(\vect{s},\vect{u})\in\Delta\times[0,1]^{\N}$  can be transformed into a partition in $\p_{\infty}$ and how it induces a mapping inspired by the so-called {paintbox procedure} \cite{Kin78,Ber06}.

Assume first that $\vect{s}\in\Delta^*$, and writing $\vect{u}=\{u_j\}_j$, define for all $i\in\N$,
$$ B_i = \{j\in\N \text{ : } \displaystyle \sum_{l=1}^{i-1} s_l < u_j\leq \sum_{l=1}^{i}s_l\}$$
with the convention that $\sum_{l=1}^{0} s_l=0$.
The $B_i$'s can be seen as the {blocks} of a partition of $\N$. 
We now create the sequence $\{\Tilde{B}_i\}_{i}$ through a reordering of those blocks by increasing order of their smallest element
with the convention that an empty block has a representative value of $\infty$. 

Now, if $\vect{s}\in\Delta\backslash\Delta^*$, we write $B_{\infty}=\{j\in\N\text{ : } \sum_{l=1}^{\infty} s_l < u_j\leq 1\}$ and apply the same procedure assuming that every element in $B_{\infty}$ forms its own block as a singleton.

From any partition of the integers $\{\Tilde{B}_i\}_i$ built from a pair of vectors $(\vect{s},\vect{u})$, we define the deterministic mapping
\begin{equation}\label{eq:LXi}
\ell^\Xi(j)=\{i\in\N:j\in \tilde{B}_i\}
\end{equation}
that associates at each integer the index of its block after reordering.
This mapping fully depends on $\vect{s}$ and $\vect{u}$ and will be made random and label-forgetful in the next section.

\begin{figure}
     \centering
     \includegraphics[width=0.5\linewidth]{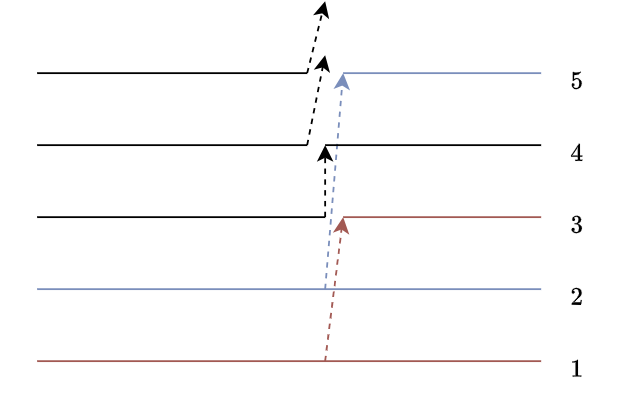}
     \caption{Illustration of the procedure when the blocks are $B_1=\{2,5,\dots\}, B_2=\{4,\dots\}$ and $B_3=\{1,3,\dots\}$. Their reordering gives $\Tilde{B}_1=B_3$, $\Tilde{B}_2=B_1$ and $\Tilde{B}_3=B_2$. 
     The particle at level 1 places a child at level 3. The particle at level 2 remains at the same level and places a child at level 5. The particle at level 3 is pushed to level 4. More generally, the particle at level $i$ is pushed at level $\inf \Tilde{B}_i$, i.e. the lowest available level, and its children are placed at the levels given by  $\Tilde{B}_i\backslash\{\inf \Tilde{B}_i\}$.}
     \label{graph_event}
 \end{figure}

One can interpret this mapping as a reproduction event. All the individuals of block $\tilde{B}_i$ are related to a mother particle at level $i$.
This particle is reallocated at level $\inf \Tilde{B}_i$ and its offspring, when they exist, are placed at levels given by $\Tilde{B}_i\backslash\{\inf \Tilde{B}_i\}$.
Figure \ref{graph_event} provides a graphical representation of such an event.
This construction ensures that the mapping satisfies a lookdown property: if $i_1<i_2$, then $\inf \Tilde{B}_{i_1}<\inf \Tilde{B}_{i_2}$.
Moreover, the label of a mother is always less than or equal to the labels of all of its offspring. 
This property will be the main ingredient for the construction of a discrete lookdown model associated to random atoms $\vect{s}$ and $\vect{u}$. 

One could think of taking for $\ell^\Xi(j)$ the infimum of the block containing $j$ instead of the index of the block, but that would break the property that the ranking between particles is maintained, described above. 
Let us stress nonetheless that the two approaches coincide when there is only one block, i.e. $s_1>0$ and $s_2=0$.
This case is related to the mapping defined in Example \ref{ex:coal}.

\subsection{The Poisson approximation technique for $\Xi$-lookdown construction}
\label{discrete}

Define a Poisson random measure $M$ on $\R_+\times\Delta\times[0,1]^{\N}$ with intensity $dt\otimes \nu(d\vect{s})\otimes\mathcal{U} (d\vect{u})$, where $\mathcal{U} (d\vect{u})=\bigotimes_{l=1}^{\infty} du_l$ and $\nu$ is a $\sigma$-finite measure on $\Delta$ such that 
\begin{equation}
\label{eqn:nu}
   \nu(\{s_1=0\})=0 \text{ and } \displaystyle\int_{\Delta} \Big(\sum s_l^2 \Big) \nu(d\vect{s})<\infty.
\end{equation}
This Poisson random measure will be the cornerstone to build both discrete and continuous dual forward and backward processes. 
Condition \eqref{eqn:nu} ensures that the rate of events affecting any fixed group of particles remains finite, and that points leading to null events are not considered. 
The finite measure $\Xi(d\vect{s})=(\sum s_l^2) \nu(d\vect{s})$ is the characteristic object of the processes in the sequel.

Fix a time horizon $T>0$. All processes will be constructed on the time interval $[0,T]$ and can be extended to $\R_+$ through standard arguments. 
Following the procedure in \cite{Ber06}, we can construct an exchangeable coalescent with characteristic measure $\Xi$ (or ${\nu}$) from the Poisson random measure $M$. 
Each atom $(t,\vect{s},\vect{u})$ of $M$ gives a coalescence time and a rule obtained from a paintbox event associated to the mass partition $\vect{s}$ and the assignment of the blocks $\vect{u}$ at time $t$.
Denote by $\{A^\Xi_t\}_t$ the resulting block-counting process of the coalescent, namely the Markov process with values in $\N$ that jumps from $n$ to $m$ at rate $q_{n,m}$ defined in \cite{MG21}, Proposition 2.1.
We know from \cite{GCS} that the latter is moment dual to a pure jump process $\{W^\Xi_t\}_{t}$ with values in $[0,1]$ given by
\begin{equation}\label{eq:diffXi}
 dW^{\Xi}_t = \int_{\Delta\times[0,1]^{\N}}  \sum_{l\ge1} s_l \left(\mathbb{1}_{\{u_l \leq W^{\Xi}_{t^-}\}} - W^{\Xi}_{t^-}\right) M(dt, d\vect{s}, d\vect{u}).
 \end{equation}
Its generator can be found in \cite{Five09}, Eq. (5.6).

  We aim to create discrete versions of these processes in order to apply the machinery developed in section \ref{partI}. 
To do so, fix $N\in\N$ and assume without loss of generality that $T={K}/{N}$, $K\in\N$. We can divide $[0,T]$ into $K$ sub-intervals of size ${1}/{N}$, and isolate (at most) one particular point of the Poisson random measure $M$ in each of these time intervals  (see Figure \ref{poisson_approx}). 
Intuitively, we take the "biggest" event in each sub-interval, according to some chosen rule. 
Rigorously, define the order relation on $[0,T]\times\Delta\times[0,1]^{\N}$ by $(t,\vect{s},\vect{u})\preceq (t',\vect{s'},\vect{u'})$ if and only if $ s_1< s'_1$ or $s_1= s'_1$ and $t\leq t'$. 
For $k\in[K]$, we consider $(t^{(k)},\vect{s}^{(k)}, \vect{u}^{(k)})$, the atom in the $k$-th  time interval chosen such that 
\begin{equation}
    \label{sk}
    (t^{(k)},\vect{s}^{(k)},\vect{u}^{(k)}) = \max\{(t,\vect{s},\vect{u})\text{ : } (t,\vect{s},\vect{u})\in M\cap(\left[\frac{{ k-1}}{N},\frac{{ k}}{N}\right)\times\Delta\times[0,1]^{\N})\}, 
\end{equation} 
where the maximum is taken according to the order $\preceq$. 
By convention, we will choose $\vect{s}=0$ if the set is empty.
It is easy to derive from \eqref{eqn:nu} that this maximum exists with probability $1$, as it ensures that the rates prevent an accumulation of points $s_1$ away from $0$. 
We then write $L^{\Xi,N}_{k}$ for the random mapping defined in \eqref{eq:LXi} and obtained from $\vect{s}^{(k)}$ and $\vect{u}^{(k)}$ in \eqref{sk}. 
It is straightforward to see the following.
\begin{lemma}\label{lem:LXi}
  The random mappings $\{L^{\Xi,N}_{k}\}_k$ are independent  and identically distributed  label-forgetful mappings that satisfy a lookdown principle.
\end{lemma}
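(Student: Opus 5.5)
We split the statement into three parts: independence and identical distribution, the lookdown principle, and label-forgetfulness.

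\emph{Independence and identical distribution.} The atoms of $M$ in the disjoint strips $[\frac{k-1}{N},\frac{k}{N})\times\Delta\times[0,1]^{\N}$ are independent, by the independence property of Poisson random measures. The intensity $dt\otimes\nu\otimes\mathcal U$ is invariant under time translation, so the restrictions of $M$ to these strips are identically distributed up to a shift in time. The maximum in \eqref{sk} is a measurable functional of the restriction of $M$ to the $k$-th strip, and it commutes with time shifts because $\preceq$ compares $s_1$ first and then $t$. Hence the $(\vect s^{(k)},\vect u^{(k)})$ are i.i.d., and so are the mappings $L^{\Xi,N}_k$ obtained from them by the deterministic rule \eqref{eq:LXi}.

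\emph{Lookdown principle.} This is deterministic and holds for every $(\vect s,\vect u)$. Each $j$ lies in exactly one block, so $L(j)=\ell^\Xi(j)$ is a singleton, in particular non-empty. Blocks that are empty have representative $\infty$ and are placed after all non-empty ones. Therefore every index $i$ corresponds to a non-empty block $\tilde B_i$, and $L^{-1}(i)=\tilde B_i\neq\emptyset$. Finally, the reordering by increasing minima gives $\min L^{-1}(i_1)=\inf\tilde B_{i_1}<\inf\tilde B_{i_2}$ whenever $i_1<i_2$.

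\emph{Label-forgetfulness.} For a sample $S$, $|L(S)|$ equals the number of distinct blocks of the partition $\{\tilde B_i\}$ that intersect $S$. This count does not depend on how the blocks are relabelled, so it equals the number of distinct paintbox blocks (the $B_i$'s, together with the singletons of $B_\infty$) met by $\{u_j\}_{j\in S}$. Condition on $\vect s^{(k)}$. The main point to check, and the only one that is not routine, is that $\vect u^{(k)}$ is then still a sequence of i.i.d.\ uniform variables independent of $\vect s^{(k)}$. This holds because the selection rule \eqref{sk} depends only on the $(t,\vect s)$-coordinates of the atoms, and the marks $\vect u$ of a marked Poisson process are i.i.d.\ given those coordinates. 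When the strip contains no atom, $\vect s=0$ and every block is a singleton, so this case is covered as well.

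Given $\vect s^{(k)}$, the vector $(u_j)_{j\in S}$ is then i.i.d.\ uniform of length $|S|$. Its law does not depend on which indices make up $S$, and $|L(S)|$ is the same deterministic function of this vector for any $S$. Hence $|L(S)|\stackrel{d}{=}|L(S')|$ whenever $|S|=|S'|$, and integrating over $\vect s^{(k)}$ concludes the proof.
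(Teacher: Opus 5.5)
Your arguments for independence and identical distribution, and for label-forgetfulness, are correct. They are what the paper leaves implicit when it calls the lemma straightforward. In particular, you note that \eqref{sk} only reads the $(t,\vect s)$-coordinates, so that by the marking theorem $\vect u^{(k)}$ stays i.i.d.\ uniform and independent of $\vect s^{(k)}$; the paper never spells this out.

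The gap is in the lookdown part. You infer that, since empty blocks are placed after all non-empty ones, \emph{every} index $i$ corresponds to a non-empty block $\tilde B_i$. This requires infinitely many non-empty blocks. That holds a.s.\ when $s_0>0$, since the dust gives infinitely many singletons, or when infinitely many $s_l$ are positive. It fails when $\vect s\in\Delta^*$ has only finitely many positive coordinates:
\begin{itemize}
\item For $\vect s=(1,0,0,\dots)$, a.s.\ $\tilde B_1=\N$ and $\tilde B_i=\emptyset$ for $i\ge 2$. So $L(j)=\{1\}$ for all $j$ and $L^{-1}(2)=\emptyset$.
\item For $\vect s=(\frac12,\frac12,0,\dots)$, $L^{-1}(i)=\emptyset$ for all $i\ge3$.
\end{itemize}
Condition \eqref{eqn:nu} does not exclude such points; take a $\Lambda$-coalescent with $\Lambda(\{1\})>0$. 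If $\nu$ charges the set $A$ of such $\vect s$, the selected atom lies in $A$ with positive probability. Pick $a>0$ with $\nu(A\cap\{s_1>a\})>0$, and note $\nu(\{s_1>a\})\le a^{-2}\int\sum_l s_l^2\,d\nu<\infty$. With probability $\frac1N\nu(A\cap\{s_1>a\})e^{-\nu(\{s_1>a\})/N}>0$, the $k$-th strip contains exactly one atom with $s_1>a$, that atom is then the selected one, and it belongs to $A$. So the requirement $L^{-1}(i)\neq\emptyset$ for all $i$ genuinely fails, and this step cannot be proved as stated.

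To close the gap you must adjust the statement, not only the proof. One option is to assume that $\nu$-a.e.\ $\vect s$ has $s_0>0$ or infinitely many positive coordinates; then your argument goes through verbatim. The other is to relax the lookdown principle so that individuals may have no descendants: drop $L^{-1}(i)\neq\emptyset$ and set $\min\emptyset=\infty$. Then the monotonicity $\min L^{-1}(i_1)\le\min L^{-1}(i_2)$ you derived still holds for all $i_1<i_2$.

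The paper's own justification, the remark following \eqref{eq:LXi} that $\inf\tilde B_{i_1}<\inf\tilde B_{i_2}$, only addresses the ordering and is silent on this point too. A smaller imprecision: the property holds almost surely rather than \emph{for every} $(\vect s,\vect u)$. For example, $u_j=0$ puts $j$ in no block and gives $L(j)=\emptyset$.
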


Observe that this example provides a case of non-exchangeable label-forgetful mappings.
Let us consider the discrete-time block-counting process $\{A^{\Xi,N}_k\}_k$ defined by $A^{\Xi,N}_0=n\in \N$ and $A^{\Xi,N}_k{=|L^{\Xi,N}_{K-k+1}\circ\cdots\circ L^{\Xi,N}_{K}([n])|}$.
Letting $N$ grow reduces the size of the time intervals, and there exists therefore $N_0$ such that if $ N>N_0$, then $A^{\Xi,N}_{\lfloor Nt\rfloor}=A^{\Xi}_t$ for any $t$ such that $Nt\in[K]$.
In particular $ A_{\lfloor Nt\rfloor}^{\Xi,N}$ converges in distribution towards $A^{\Xi}_t$
 for any $t\in[0,T]$, fulfilling assumption $i)$ in Proposition~\ref{theo_cv}. 
 The reduction in size of the time intervals is illustrated in Figure~\ref{poisson_approx}.
 With the same argument, we can construct the random variables $L^\Xi_t(i)=\lim_{N\to\infty}L^{\Xi,N}_{K-\lfloor Nt\rfloor+1}\circ\cdots\circ L^{\Xi,N}_{K}(i)$ for any $i\in\N$ and $t\in[0,T]$ and the lookdown principle derives from the construction.  

\begin{figure}
    \centering
    \includegraphics[width=0.5\linewidth]{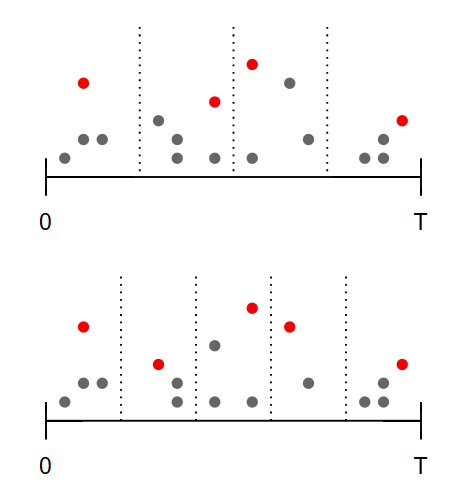}
    \caption{Representation
    of the realization of the Poisson random measure $M$ on $[0,T]\times\Delta\times[0,1]^{\N}$, where, on the y-axis, we only consider the first element $s_1$ of any point $\vect{s}\in\Delta$. Points in red are the selected points in each of the time intervals. Points are accumulating around 0 and become scarcer for larger values.}
    \label{poisson_approx}
\end{figure}
 
Label-forgetfulness entails the existence of a forward process $\{X^{\Xi,N}_k\}_k$ with values in $\{0,1\}^\N$ as in (\ref{X_forward}). 
Thus, we can define the associated de Finetti process $\{\Theta^{\Xi,N}_k\}_{ k}$. 
Their initial conditions lie in the exchangeability of $\{X^{\Xi,N}_0(i)\}_i$, and the value of $\Theta^{\Xi,N}_0=\lim_{p}\frac1p\sum_{i=1}^pX_0^{\Xi,N}(i)$. 
Repeating this construction for any $N\in\N$, we build a sequence of $[0,1]$-valued Markov processes $\{\Theta^{\Xi,N}\}_{N}$. 
Now, using the discrete duality resulting from Theorem \ref{thduality} and observing that conditions of Proposition \ref{theo_cv} are fulfilled, we are able to show that the sequence $\{\Theta^{\Xi,N}\}_{N}$ approximates the forward process dual to the block-counting of the $\Xi$-coalescent.  

\begin{Lemma}
\label{cor:cvxi}
    For each $N\in\N$ and $t\in[0,T]$, define the continuous-time process $\{{W}^{\Xi,N}_t\}_t$ by ${W}^{\Xi,N}_t=\Theta^{\Xi,N}_{\lfloor Nt \rfloor}$. 
    The sequence $\{{W}^{\Xi,N}\}_{N}$ weakly converges to $W^\Xi$ in $\D([0,T],[0,1])$ as $N\to\infty$.
\end{Lemma}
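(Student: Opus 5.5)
The plan is to deduce the lemma from Proposition~\ref{theo_cv}, applied with $L^N=L^{\Xi,N}$, $a_N=N$, limiting backward process $A_t=A^\Xi_t$ and limiting forward process $W=W^\Xi$. Three things must be checked: the mappings are label-forgetful, assumption $i)$ holds, and assumption $ii)$ holds.

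First, Lemma~\ref{lem:LXi} states that the $L^{\Xi,N}_k$ are i.i.d.\ and label-forgetful. Theorem~\ref{thduality} then applies: $\Theta^{\Xi,N}$ is moment dual to the discrete chain with transitions $\P(|L^{\Xi,N}([n])|=m)$. I will identify this chain with $A^{\Xi,N}$ as defined above. Since the $L^{\Xi,N}_k$ are i.i.d., the composition $L^{\Xi,N}_{K-k+1}\circ\cdots\circ L^{\Xi,N}_K$ has the same law as $k$ i.i.d.\ compositions. By label-forgetfulness, $|L^{\Xi,N}(S)|$ depends only on $|S|$, so $|\,\cdot\,|$ of the composition is a Markov chain with exactly those transitions.

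Second, for assumption $i)$, I use the cited result of \cite{GCS}: $A^\Xi$ is moment dual to $W^\Xi$ from \eqref{eq:diffXi}. I also need $W^\Xi$ to be a $[0,1]$-valued Feller process. I would argue this directly from the duality. The semigroup applied to a monomial, $x\mapsto \E_x[(W^\Xi_t)^p]=\E_p[x^{A^\Xi_t}]$, is a probability generating function, hence a continuous function of $x$. By linearity and density of polynomials in $C([0,1])$, together with the contraction property, $T_t$ maps $C([0,1])$ into itself. Strong continuity at $t=0$ follows because $A^\Xi$ started at $p$ stays at $p$ for a positive time: its total jump rate from $p$ is finite by \eqref{eqn:nu}. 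Hence $\E_p[x^{A^\Xi_t}]\to x^p$ uniformly in $x$ as $t\to0$.

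Third, for assumption $ii)$: fix $t$ and start both processes at $n$, built from the same Poisson random measure $M$. Only finitely many atoms of $M$ on $[0,T]$ affect the first $n$ levels, since $n$ lineages merge at finite total rate. As $N\to\infty$, almost surely each relevant atom eventually lies alone in its sub-interval of length $1/N$, and it is then the $\preceq$-maximal atom there. The irrelevant atoms with tiny $s_1$ in the other intervals do not act on the current lineages. Consequently $A^{\Xi,N}_{\lfloor Nt\rfloor}=A^\Xi_t$ for $N$ large, up to the boundary effect of $\lfloor Nt\rfloor$, which vanishes since almost surely no atom sits at time $t$. This a.s.\ convergence gives convergence in distribution, and Proposition~\ref{theo_cv} then yields $W^{\Xi,N}\Rightarrow W^\Xi$ in $\D([0,T],[0,1])$.

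The main obstacle is this third step, making the coupling rigorous. Two issues need care. First, an atom may be chosen as maximal in its interval yet fail to act on the current lineages, while a smaller atom in the same interval that does act is discarded. Since the number of atoms touching the first $n$ levels is finite, this bad event has probability tending to zero. Second, the backward ordering $K-k+1,\dots,K$ must be matched with the time reversal of $A^\Xi$; the Poisson measure is invariant under time reversal on $[0,T]$, so the laws agree. I would handle both points with a dominated-convergence argument on the event that all relevant atoms are separated by more than $2/N$.
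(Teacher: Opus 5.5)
Your route is the paper's: apply Proposition~\ref{theo_cv} with $a_N=N$, take assumption $i)$ from the duality of \cite{GCS}, and get assumption $ii)$ from the coupling in which $A^{\Xi,N}_{\lfloor Nt\rfloor}=A^\Xi_t$ for $N$ large. This holds because every atom of $M$ that merges current lineages is eventually selected in its own cell. Your additions are correct and fill in points the paper leaves implicit: deriving the Feller property of $W^\Xi$ from the duality, building the Markov chain from label-forgetfulness, and making explicit that the coupling uses the time-reversed Poisson measure.

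One justification needs fixing, and it is precisely the obstacle you identified. When $\nu$ is infinite, no atom is ever alone in a cell of length $1/N$. Finiteness of the set of relevant atoms does not, by itself, stop a non-relevant atom with larger $s_1$ from sharing a cell with a relevant one, in which case the relevant atom is discarded. Your proposed event, that relevant atoms are separated by more than $2/N$, does not exclude this.

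The missing fact is local finiteness of large atoms: $\nu(\{s_1\ge\epsilon\})\le\epsilon^{-2}\int\sum_l s_l^2\,\nu(d\vect{s})<\infty$. Hence, almost surely, only finitely many atoms on $[0,T]$ have $s_1\ge\epsilon$. Take $\epsilon$ to be the smallest $s_1$ among the finitely many relevant atoms; it is positive because $\nu(\{s_1=0\})=0$. For $N$ large enough that these finitely many large atoms sit in distinct cells, each relevant atom is the strict $\preceq$-maximum of its cell. This is the same fact the paper invokes for the existence of the maximum in \eqref{sk}. With this event replacing yours, your coupling argument goes through and the almost sure equality gives $ii)$.
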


So far, we managed to obtain a countable representation in discrete time of the forward-in-time dual of the $\Xi$-coalescent block-counting process. As in \cite{Seven05,Five09,DK99}, particles are ranked according to the remaining time of their descendants in the system but this ranking does not break exchangeability. This is made clear thanks to the label-forgetfulness property.
We add in the next section the missing components to build a process whose forward in time type diffusion matches that of Theorem \ref{lazy}.

\subsection{The Poisson approximation technique for branching and pairwise coalescence} \label{sec:Xi+K}

The method we designed in the previous section consists in two steps: defining a label-forgetful mapping from atoms of a Poisson random measure and isolating those atoms thanks to a discretization of the time. It can be generalized to involve other phenomena. In particular, it can encompass the full family of exchangeable coalescents by completing the $\Xi$-events with Kingman binary mergers, and most importantly it can account for branching events.

The new branching events are triggered by elements $\vect{j}=(j_1,j_2)$ of $\N^2$ where $j_1$ provides the level of the branching particle and $j_2$ the number of added potential ancestors this particle takes. 
Precisely, the point $\vect{j}\in \N^2$ induces the mapping $\ell^B$ such that $\ell^B(j_1)=\{j_1,j_1+1,...,j_1+j_2\}$, $\ell^B(i)=\{i\}$ for $i<j_1$ and  $\ell^B(i)=\{i+j_2\}$ otherwise. 
We will incorporate those atoms in the Poisson point process, according to a measure whose restriction to $[0,T]\times\N^2$ has intensity
 $dt\otimes c\sum_{j_1\geq1}\delta_{j_1}\otimes\vect{p}$, where
 $\vect{p}$ is a probability measure with finite expectation and probability generating function $\Phi$.

Similarly, the new binary merger events will be induced by atoms in $C$ defined in Theorem~\ref{lazy} providing the level of the two particles that coalesce. 
The point $\vect{i}=(i_1,i_2)$ induces the mapping $\ell^K$ 
such that $\ell^K(i_1)=\ell^K(i_2)=i_1$. 
 For other integers, $\ell^K(i)=i$ if $i<i_2$ and $\ell^K(i)=i-1$ if $i>i_2$. 
The associated measure, restricted to $[0,T]\times C$, is $dt\otimes\sum_{i_1<i_2}\delta_{(i_1,i_2)}$. 
It is clear that both $\ell^K$ and $\ell^B$ respect the lookdown principle.

In the resulting (random) continuous-time branching-coalescing process, the former events lead to each level branching independently from each other at rate $c$, and the latter to each pair coalescing at rate $1$. The associated block-counting process involving these two kinds of events is dual to the strong solution of
$$ dW^{K+B}_t = \sqrt{W^{K+B}_t(1-W^{K+B}_t)}dB_t +  f(W^{K+B}_t)W^{K+B}_t(1-W^{K+B}_t)dt,$$
where $\{B_t\}_t$ stands for the standard Brownian motion and $f(x)=c(1-\Phi(x))/(1-x)$. 
The first term above arises from the Wright-Fisher component, dual to Kingman's coalescent block-counting process, and the second to the drift corresponding to frequency-dependent selection \cite{GCS}.

Once again, we wish to provide a discrete construction using these new events to apply the machinery developed in section~\ref{partI}.
To obtain a wider version of Lemma \ref{cor:cvxi} with selection drift and Wright-Fisher diffusion components, consider the Poisson random measure
$\Pi$ on $E:=\R_+\times\Delta\times[0,1]^{\N}\times C\times\N^2$ with intensity $dt\otimes\mu(d\vect{s},d\vect{u}, d\vect{i},d\vect{j})$,
with $\mu=\mu_\Xi+\mu_K+\mu_B$
and
$$\mu_\Xi=\nu\otimes \mathcal{U}\otimes\delta_0\otimes\delta_0,\quad \mu_K=\delta_0\otimes\delta_0\otimes\sum_{\vect i}\delta_{\vect{i}}\otimes\delta_0,\quad \mu_B=\delta_0\otimes\delta_0\otimes\delta_0\otimes\left(\sum_{j_1}\delta_{j_1}\otimes\vect {p}\right).$$
The measures $\delta_0$ have to be understood as Dirac masses on the null element of their respective spaces. 
The random measure $\Pi$ yields three different kinds of atoms.
Atoms $(t,\vect{s},\vect{u},0,0)$ lead to $\Xi$-events  determined by mappings $\ell^\Xi$ defined in \eqref{eq:LXi}.
Atoms $(t,0,0,\vect{i},0)$ lead to binary events associated to the pair $\vect{i}=(i_1,i_2)$ and the mapping $\ell^K$. 
Atoms $(t,0,0,0,\vect{j})$ lead to branching events associated to the pair $\vect{j}=(j_1,j_2)$ and the mapping $\ell^B$.

We can provide a similar discretization procedure as in the previous section.
Define the function $g$ on $E$ such that $$ g(t,\vect{s},\vect{u},\vect{i},\vect{j}):=s_1\mathbb{1}_{\vect{s}\neq0}+\frac{1}{i_1}\mathbb{1}_{\vect{i}\neq0}+\frac{1}{j_1}\mathbb{1}_{\vect{j}\neq0}$$ and the order relation $\preceq_E $ on $E$ by $(t,\vect{s},\vect{u},\vect{i},\vect{j})\preceq_E (t',\vect{s'},\vect{u'},\vect{i'},\vect{j'})$ if and only if $g(t,\vect{s},\vect{u},\vect{i},\vect{j})<g(t',\vect{s'},\vect{u'},\vect{i'},\vect{j'})$ or $t\leq t'$ in case of an equality. 
At $k$-th time interval of size $1/N$, we select the atom $(t^{(k)},\vect{s}^{(k)},\vect{u}^{(k)},\vect{i}^{(k)},\vect{j}^{(k)})$ which equals
$$ \max\{(t,\vect{s},\vect{u},\vect{i},\vect{j}): (t,\vect{s},\vect{u},\vect{i},\vect{j})\in \Pi\cap\left[\frac{{ k-1}}{N},\frac{{ k}}{N}\right)\times\Delta\times[0,1]^{\N}\times C\times\N^2\}$$
where the maximum is taken according to $\preceq_E$.

The proof of Theorem \ref{lazy} is finalized by adapting the same token as for Lemmas \ref{lem:LXi} and \ref{cor:cvxi} to the framework of the present section.
For each $N$, we can define a sequence of independent and identically distributed label-forgetful mappings $\{L^N_k\}_k$ from which we can build a discrete-time block-counting process $\{A^N_k\}_k$ that, once rescaled in time, converges towards the continuous-time block-counting process $\{A_t\}_t$ with dynamics in \eqref{eq:dynA}. 
We then obtain by duality the associated discrete-time de Finetti process $\{\Theta^{N}_k\}_k$ that, once rescaled in time, converges towards the unique solution to \eqref{eq:EDSW} thanks to Proposition \ref{theo_cv}.

Note that, once again, the rule to select the atom in each time window is arbitrary but it allows, as the time grid becomes smaller, to take every relevant atom  into account.
This provides a graphical construction of the lookdown model with selection studied in \cite{BP15} and the resulting duality relation extends that between the Wright-Fisher diffusion with selection and the ancestral selection  graph \cite{KN97}. The latter is precisely recovered by taking a measure $\vect{p}$ such that $p_2=1$, and replacing $\nu$ with $\delta_0$.

\color{black}
Using the same technique, it is possible to define even more general processes, and in particular involving broader branching events similar to that of Example \ref{ex:branching}. However, the definition of the associated Poisson random measure may become tedious, and we therefore present a more elegant method based on the observation in Example $\ref{ex:mix}$.

\subsection{The mixture approximation technique for lookdown with selection}\label{sec:mixture}
\label{selection}

When continuous-time block-counting processes involve several distinct components, such as branching-coalescing processes arising in Theorem \ref{lazy}, an alternative to the Poisson random measure approach of sections \ref{discrete} and \ref{sec:Xi+K} is possible. 
Rather than integrating each kind of event into a single large Poisson point process, one can define a random mapping that {mixes} events directly, with coin tosses determining which rule to apply at each step. 
This approach may be seen as a mixture of label-forgetful mappings. 
\begin{proposition}\label{propmix}
   For $i=1,2$, consider two sequences of label-forgetful mappings $\{L^{i.N}\}_N $ from which one can approximate forward and backward continuous-time processes $\{W^{i}_t\}_t$ and $\{A^{i}_t\}_t$ using Proposition \ref{theo_cv}. 
    Also suppose that $\{W^i_t\}_t$ has generator $\mathcal{F}^i$ and $\{A^i_t\}_t$ has generator $\mathcal{G}^i$.
    Then, if $L^N=YL^{1.N}+(1-Y)L^{2.N}$ and $Y$ is a Bernoulli random variable of parameter $p$ independent of both mappings,
    one can also construct a forward and a backward discrete-time process from the sequence of label-forgetful mappings $\{L^N\}_N$, whose discrete-time generators converge towards $p\mathcal{F}^1+(1-p)\mathcal{F}^2$ on the forward side, and $p\mathcal{G}^1+(1-p)\mathcal{G}^2$ on the backward side.
\end{proposition}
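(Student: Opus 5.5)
First I check that $L^N=YL^{1.N}+(1-Y)L^{2.N}$ is label-forgetful, which is Example~\ref{ex:mix}. For any finite $S\subset\N$, conditioning on $Y$ gives
\[
\P(|L^N(S)|=m)=p\,\P(|L^{1.N}(S)|=m)+(1-p)\,\P(|L^{2.N}(S)|=m).
\]
Both terms on the right depend only on $|S|$. So Theorem~\ref{forgetful} produces a forward de Finetti process $\Theta^N$ and a backward block-counting process $A^N$, and Theorem~\ref{thduality} makes them moment dual. The one-step kernel of $A^N$ is the mixture of the two kernels:
\[
P^N(n,m)=p\,P^{1,N}(n,m)+(1-p)\,P^{2,N}(n,m).
\]
The same holds for the forward kernel. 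Indeed, the moment computation in Section~\ref{partI} gives
\[
\E[(\Theta^N_{k+1})^n\mid\Theta^N_k=x]=\E[x^{|L^N([n])|}]=p\,\E[x^{|L^{1.N}([n])|}]+(1-p)\,\E[x^{|L^{2.N}([n])|}],
\]
and a law on $[0,1]$ is determined by its moments.

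Next I treat the generators. The discrete-time generators are $a_N(P^N-I)$, and the forward one acts on $C_0$. By linearity they split as $p\,a_N(P^{1,N}-I)+(1-p)\,a_N(P^{2,N}-I)$, on both the forward and the backward side. Here I have to assume that both approximations use the same time scale $a_N$. If they do not, I would rescale, for example by lazifying one mapping: take $L^{i.N}$ with some probability and the identity otherwise, where the identity is trivially label-forgetful. The hypothesis that each $L^{i.N}$ approximates $W^i$ and $A^i$ via Proposition~\ref{theo_cv} should then give $a_N(P^{i,N}-I)f\to\mathcal F^i f$. The cleanest route is to work with polynomials $f_n(x)=x^n$. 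By duality,
\[
a_N(P^{i,N}-I)f_n(x)=a_N\big(\E_n[x^{A^{i,N}_1}]-x^n\big),
\]
which is the backward generator applied to $m\mapsto x^m$ at $n$. Convergence of the backward discrete generators on the functions $m\mapsto x^m$ therefore transfers directly to the forward generators on monomials. The forward limit is then $p\mathcal F^1+(1-p)\mathcal F^2$ on polynomials, and the backward limit is $p\mathcal G^1+(1-p)\mathcal G^2$. The duality relation $\mathcal F^i f_n(x)=\mathcal G^i x^{\cdot}(n)$ is linear and hence passes to the mixture.

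The main obstacle is that Proposition~\ref{theo_cv} gives only convergence of semigroups, or of one-dimensional marginals, and not of generators. Step by step, I would try the following.

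\begin{enumerate}
\item Invoke the Trotter--Kurtz equivalence between semigroup convergence and generator convergence for discrete-time chains (Kallenberg, Theorem~17.25). This recovers $a_N(P^{i,N}-I)f\to\mathcal F^i f$ on a core, which I take to be the polynomials.
\item For the backward side, use that the $\N$-valued chains have transitions $P^{i,N}(n,\cdot)$. The limit rates are then $\lim a_N P^{i,N}(n,m)$ for $m\neq n$.
\item Close the argument on a core. For the polynomial core this needs uniformity in $x\in[0,1]$, which I would obtain from Dini's theorem as in the proof of Proposition~\ref{theo_cv}.
\end{enumerate}

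Finally, if one wants process convergence as well, I would apply Kallenberg, Theorem~17.25, again. This uses the mixed generator on polynomials, provided they form a core for the mixed limit.
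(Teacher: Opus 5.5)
Your proposal follows essentially the same route as the paper's proof. The mixed one-step kernel is $pP^{1,N}+(1-p)P^{2,N}$ by conditioning on $Y$; Kallenberg's Trotter--Kurtz theorems link semigroup and generator convergence on the backward side; and duality with $f(y)=x^y$, together with density of monomials, transfers the mixture to the forward generator.

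Your caveats about a common time scale $a_N$ and a common core are also implicit in the paper. The one thing to tighten, which the paper glosses over as well, is that your Step~1 overstates Trotter--Kurtz. It only gives sequences $f_N\to f$ with $a_N(P^{i,N}-I)f_N\to\mathcal F^i f$, not convergence at a fixed $f$. Summing the two components therefore needs a common approximating sequence, for example from direct convergence of the backward one-step rates.
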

\begin{proof}
First recall from Example \ref{ex:mix} that ${L}^{N}$ is label-forgetful and thus define $\{{A}^{N}_k\}_k$ its associated backward-in-time block-counting process (starting at $n$ under $\P_n$), 
and $\{{\Theta}^{N}_k\}_k$ its forward-in-time de Finetti process. 
If $\{{A}^{1,N}_k\}_k$ and $\{{A}^{2,N}_k\}_k$ denote the block-counting processes associated to $L^{1,N}$ and $L^{2,N}$,
we have that 
\[
\begin{aligned}
\mathbb{E}_n[f({A}^{N}_1)] &= \mathbb{E}_n[f\left({Y}A^{1,N}_1+(1-{Y}){A}^{2,N}_1\right)]  \\ & = \mathbb{E}_n[{Y}f(A^{1,N}_1)+(1-{Y})f({A}^{2,N}_1)]\\ &  =p\mathbb{E}_n[f(A^{1,N}_1)]+(1-p)\mathbb{E}_n[f({A}^{2,N}_1)]    
\end{aligned}
\]
for measurable functions $f:\N\to \R$.  
   Through the combination of Theorems 17.25 and 17.28 in \cite{Kal92}, we get that, for any $t>0$, ${A}^{N}_{\lfloor a_Nt \rfloor}$ converges towards the marginal ${A}_t$ of a $\N$-valued Markov process with generator $p\mathcal{G}^{1}+(1-p){\mathcal{G}}^{2}$. 

Taking $f(y)=x^y$ above for some $x\in(0,1)$, we can apply Theorem \ref{thduality} and get, for any $n$,
$$\mathbb{E}_x\left[\left(\Theta_1^N\right)^n\right]= p\mathbb{E}_x\left[\left(\Theta_1^{1,N}\right)^n\right]+ (1-p) \mathbb{E}_x\left[\left(\Theta_1^{2,N}\right)^n\right].
$$
 By observing that the class of monomials is dense in the set of bounded measurable functions from $[0,1]$ to $\R$, we obtain that the discrete generator of $\{\Theta^N_k\}_k$ also converges towards $p\mathcal{F}^1+(1-p)\mathcal{F}^2$.
\end{proof}

One can create any mixed process with the right ingredients as long as they all involve label-forgetful choices of ancestors and harbor block-counting processes converging to the adequate known continuous-time Markov processes, see Figure \ref{fig:mixed_event}. 
To illustrate this technique, we show how one can easily obtain processes from Theorem \ref{lazy}, with even more general selection associated to branching events where, even in the continuous-time limit, all particles branch simultaneously. This model is a particular case of the Wright-Fisher diffusion with selection in random environment studied in (\cite{GSW25}, Eq. (4)). 

\begin{figure}
    \centering
    \includegraphics[width=0.99\linewidth]{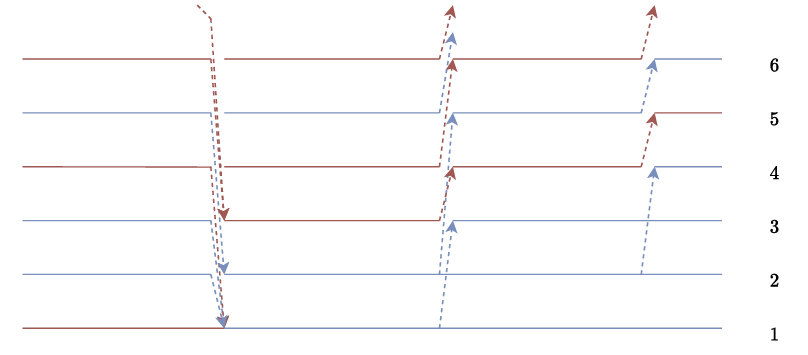}
    \caption{Illustration of mappings used in the proof of Theorem \ref{theomix}. Colors indicate different types. We observe three events, the first one is a {frequency-dependent selection} event, in which $L(\{1\})=\{1,2,3,4\}$ and its type changes from red to blue. Next we observe a $\Xi$-event and finally a Kingman-like event.
    }
    \label{fig:mixed_event}
\end{figure}

\begin{theorem}\label{theomix}
     Recall the notations of Theorem \ref{lazy}.
     Additionally, consider a probability measure on $\N\backslash\{1\}$ with probability generating function $\varphi$ and mean $m$, and a Poisson process $\{N_t\}_t$ with intensity $d/(m-1)$ for $d>0$.
     There exists a sequence of label-forgetful mappings $\{\hat{L}^{N}\}_N$  satisfying a lookdown principle and from which one can construct 
 a $\{0,1\}^\N$-valued process  whose induced frequency process $\{\hat W_t\}_{t}$ is the solution of the stochastic differential equation
\begin{align}\label{eq:EDShW}
           d\hat{W}_t & = \sqrt{\hat{W}_t(1-\hat{W}_t)}dB_t + f(\hat{W}_t)\hat W_t(1-\hat{W}_t)dt \notag \\ &  \qquad +(\varphi(\hat{W}_{t-})-\hat{W}_{t-})dN_t\notag \\ & \qquad + \int_{\Delta\times[0,1]^{\N}}  \sum_{l\ge1} s_l \left(\mathbb{1}_{\{u_l \leq \hat{W}_{t^-}\}} -\hat{W}_{t^-}\right) {M}(dt, d\vect{s}, du),
        \end{align}
      and its dual branching-coalescing process is the $L$ induced process $\{\hat A_t\}_t$.

\end{theorem}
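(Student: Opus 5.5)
The plan is to build $\hat L^N$ as a mixture, in the sense of Example~\ref{ex:mix} and Proposition~\ref{propmix}, of two families of label-forgetful mappings.

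The first family is the sequence $\{L^N\}_N$ from the proof of Theorem~\ref{lazy}. Through Proposition~\ref{theo_cv}, it already yields the forward generator of \eqref{eq:EDSW} and the backward generator \eqref{eq:dynA}.

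The second family is a new \emph{simultaneous branching} mapping $L^{S}$, modelled on Example~\ref{ex:branching} but arranged to respect a lookdown principle. Let $\{\xi_i\}_i$ be i.i.d. with generating function $\varphi$, so each $\xi_i\ge 2$. Set
\[
L^S(i)=\Big\{\sum_{l<i}\xi_l+1,\dots,\sum_{l\le i}\xi_l\Big\}.
\]
The sets $L^S(i)$ are consecutive disjoint intervals. Hence every $j$ has exactly one preimage, the preimages are nondecreasing in $j$, and the lookdown principle holds.

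Label-forgetfulness of $L^S$ is immediate. For any $S$ with $|S|=n$, the sets $L^S(i)$, $i\in S$, are disjoint, so $|L^S(S)|=\sum_{i\in S}\xi_i$. This has the same law for every such $S$, with generating function $\varphi^n$. By Theorem~\ref{forgetful}, $L^S$ therefore propagates exchangeability. By Theorem~\ref{thduality}, its one-step de Finetti transition satisfies
\[
\E_x[\Theta_1^n]=\E[x^{\sum_{i\le n}\xi_i}]=\varphi(x)^n .
\]
So $\Theta_1=\varphi(x)$ deterministically, since its moments are those of a Dirac mass. This is exactly the jump $x\mapsto x+(\varphi(x)-x)$ appearing in \eqref{eq:EDShW}. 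On the backward side, each of the $n$ lines is replaced by $\xi_i$ lines.

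Next, define
\[
\hat L^N=Y_NL^N+(1-Y_N)L^S,
\]
where $Y_N$ is an independent Bernoulli variable with $\P(Y_N=0)=\frac{d}{(m-1)N}$, and time steps have size $1/N$. This mixture is again label-forgetful by Example~\ref{ex:mix}, and satisfies a lookdown principle since each component does (the randomisation is pointwise in the realisation).

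To get the generator, I would apply the computation in the proof of Proposition~\ref{propmix}, but with $p=p_N\to1$. One step of the discrete backward chain gives
\[
N\big(\E_n[f(\hat A_1)]-f(n)\big)=(1-\tfrac{d}{(m-1)N})\,N\big(\E_n[f(A^N_1)]-f(n)\big)+\tfrac{d}{m-1}\big(\E[f(\textstyle\sum_{i\le n}\xi_i)]-f(n)\big).
\]
The first term converges to the generator of \eqref{eq:dynA}. The second is the generator of an extra event occurring at rate $d/(m-1)$, in which all $n$ lines branch simultaneously. On the forward side the same decomposition, applied to monomials via Theorem~\ref{thduality}, gives the generator of \eqref{eq:EDSW} plus $\frac{d}{m-1}\big(g(\varphi(x))-g(x)\big)$. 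That added term is the generator of the Poisson jump term $(\varphi(\hat W_{t-})-\hat W_{t-})dN_t$.

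Finally, convergence of the processes follows from Proposition~\ref{theo_cv}, taking $a_N=N$. Condition i) requires the limiting backward process $\hat A$ to be moment dual to the Feller solution of \eqref{eq:EDShW}. I would check this by the generator criterion: compare the forward generator applied to $x^n$ with the backward generator applied to $n\mapsto x^n$, which coincide term by term as computed above. Condition ii) follows from the generator convergence, using Kallenberg's Theorems 17.25 and 17.28 as in the proof of Proposition~\ref{propmix}.

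The main obstacle is non-explosion of $\hat A$. The simultaneous branching can increase the number of lines geometrically, so both the well-posedness of the dual and the validity of the moment-duality argument need care. I would first try a Lyapunov bound: the branching part grows $\E[\hat A_t]$ at most exponentially, with rate $\frac{d}{m-1}(m-1)=d$ from simultaneous branching plus $cn\sum_j jp_j$ from ordinary branching, while coalescence only decreases $\hat A$. Since $\vect p$ has finite mean and $m<\infty$, this gives $\E[\hat A_t]<\infty$. That bound also justifies the interchange of limits needed for the duality statement.
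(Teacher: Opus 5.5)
Your plan follows the paper's own route. The new jump term comes from the branching mapping of Example~\ref{ex:branching} with offspring generating function $\varphi$, which is the paper's $\hat L^{B,N}$. It is glued to the other ingredients by the mixture device of Example~\ref{ex:mix} and Proposition~\ref{propmix}.

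The bookkeeping differs. The paper mixes four components: $L^{\Xi,N}$, a Wright--Fisher-type $L^{K,N}$, the per-particle branching $\tilde L^{B,N}$, and $\hat L^{B,N}$ (your $L^S$ with probability $d/N$, the identity otherwise). It then quotes \cite{GSW25} for the limit of the last one. You mix only two, with vanishing weight $\P(Y_N=0)=d/((m-1)N)$. This gives the sum of generators in \eqref{eq:EDShW} directly, instead of the convex combination of Proposition~\ref{propmix}. It also gives jumps at rate exactly $d/(m-1)$, as in the statement. Your observation that $\E_x[\Theta_1^n]=\varphi(x)^n$ forces $\Theta_1=\varphi(x)$ identifies the jump without any citation. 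Your non-explosion check is something the paper does not do.

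\textbf{Gap: your first component is not label-forgetful.} Example~\ref{ex:mix} needs $L^N$ to be label-forgetful, and the discretisation of Section~\ref{sec:Xi+K} is not.
\begin{itemize}
\item The order $\preceq_E$ scores Kingman and branching atoms by $1/i_1$ and $1/j_1$, so it favours low levels. With branching atoms alone, level $1$ branches in a window with probability $1-e^{-c/N}$, but level $2$ only with probability $e^{-c/N}(1-e^{-c/N})$. So $|L^N(\{1\})|$ and $|L^N(\{2\})|$ have different laws.
\item Once Kingman atoms are included, every window contains infinitely many atoms with $i_1=1$. They all have $g=1$ and their times are dense, so the maximum is not even attained.
\end{itemize}
As written, $\hat L^N$ need not propagate exchangeability, and the de Finetti process $\hat\Theta^N$ is not available.

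\textbf{How to repair it.} The paper's proof of this theorem never uses Section~\ref{sec:Xi+K}, and you should avoid it too. In each step, replace $L^N$ by the composition of independent label-forgetful pieces:
\begin{itemize}
\item $L^{\Xi,N}$, which is label-forgetful because its $\max s_1$ rule ignores $\vect u$;
\item ordinary branching from Example~\ref{ex:branching} with $\P(\xi_1=1)=1-c/N$, i.e.\ the paper's $\tilde L^{B,N}$;
\item a label-forgetful Kingman approximation. For example, apply the single-block paintbox event $\vect s=(N^{-1/2},0,\dots)$ at every step; each pair then merges with probability $1/N$ and each triple with probability $N^{-3/2}$.
\end{itemize}
Do not copy the paper's $L^{K,N}$ for the Kingman part. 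A Wright--Fisher rule on levels $\le N$ with the identity above is not label-forgetful, since $\{1,2\}$ can merge but $\{N+1,N+2\}$ cannot.

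This composition has the properties you need:
\begin{itemize}
\item A composition of independent label-forgetful mappings is label-forgetful; condition on the inner mapping.
\item It preserves the lookdown principle, because $\min (L^a\circ L^b)^{-1}(j)=\min (L^b)^{-1}\big(\min (L^a)^{-1}(j)\big)$.
\item Each piece changes the block count of a fixed $n$-sample with probability $O(1/N)$, so their generators add.
\end{itemize}
With this replacement, your two-component mixture with $L^S$ goes through unchanged.
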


\begin{proof}
We apply Proposition \ref{propmix} with four mappings instead of two: $L^{\Xi,N}$ from section \ref{discrete}, and three new mappings $L^{K,N}$, $\tilde{L}^{B,N}$ and $\hat{L}^{B,N}$ involving binary coalescing and branching events that we describe in the sequel.

For the binary coalescing part, adapt the construction in section \ref{xi_event} by considering $N$ i.i.d. uniform random variables on $[0.1]$, $U_1,\dots, U_N$.
Define for all $i\le N$,
$$ B_i = \{j\le N \text{ : } \displaystyle \frac{i-1}N < U_j\leq \frac{i}N\}.$$
Remove the empty blocks and define $\{\Tilde{B}_i\}_{i}$ through a reordering by increasing order of their smallest element.
Now, set for $j\le N$
$$L^{k,N}(j)=\{i\in\N:j\in \tilde{B}_i\}$$
and $L^{k,N}(j)=j$ for $j>N$.
Let us define $\{ A^{K,N}_k\}_k$ the block-counting chain obtained  by iterating $ L^{K,N}$.  
It is clear that this behaves like that of a Wright-Fisher model of size $N$.
This implies that the rescaled process converges towards Kingman's coalescent block-counting process, dual to the classical Wright-Fisher diffusion.

For the first branching part, define $\tilde{L}^{B,N}$ as in Example \ref{ex:branching} through i.i.d. random variables $\{\xi_i\}_{i}$ such that $\P(\xi_1=j)=c p_j/N$, $j\ge2$, and $\P(\xi_1=1)=1-c/N$ for $c>0$ and a positive sequence $\{p_j\}_j$ such that $\sum_{j\ge2}p_j=1$.
Such a mapping can be described sequentially. 
Starting from level 1, each particle draws $j$ arrows with probability $c p_j/N$ (\emph{resp.} one arrow with probability $1-c/N$) towards the $j$ next available particles (\emph{resp.} the next available particle). 
This label-forgetful satisfies a lookdown property.
 Moreover, a discrete-time block-counting process $\{\tilde{A}^{B,N}_k\}_k$ can be obtained by iterating this procedure.
Under $\P_n$, the law of the block-counting process starting at $n$, and rescaling time,
it is clear, due to the independence of the random variables $\{\xi_i\}_i$, that the sequence of processes $\{\{\tilde{A}^{B,N}_{\lfloor Nt\rfloor}\}_t\}_N$ converges weakly,  as $N\to\infty$, to a continuous-time Galton-Watson process started at $n$,  where each individual branches independently from the others at rate $c$ (the progeny being governed by $\{p_j\}_j$).

For the second branching component,
    define $\hat{L}^{B,N}$ as follows.
     With probability $d/N$, $\hat L^{B,N}$ is the mapping from Example~\ref{ex:branching} built through i.i.d. random variables $\{\xi_i\}_{i}$ with      probability generating function $\varphi$ and mean $m$.
     With probability $1-d/N$, $\hat L^{B,N}$ is the identity, that is $\hat L^{B,N}(i)=i$. 
     Again, let us define $\{\hat A^{B,N}_k\}_k$ the block-counting chain obtained  by iterating $\hat L^{B,N}$.  
 By \cite{GSW25}, the rescaled process ${\hat A^{B,N}}$ converges towards the continuous-time block-counting process dual to the solution of 
$$ d\hat W^{B}_t = (\varphi(\hat W^{B}_{t-})-\hat W^{B}_{t-})dN_t.$$

It just remains to plug these pieces into the mixture technique developed in Proposition \ref{propmix} and to identify the limit generators with those of the limiting processes $\{\hat{A}_t\}_t$ and $\{\hat{W}_t\}_t$.
\end{proof}

\section*{Acknowledgments}
AO acknowledges partial support from the chaire program "Mathematical modeling and biodiversity" (Ecole Polytechnique, Museum National d’Histoire Naturelle, Veolia Environnement, Fondation X). AO is supported by a grant from Fondation CFM pour la recherche which had no role in the studies or in the decision to publish.

ASJ acknowledges Amandine Véber and MAP5 for their hospitality, and DGAPA-PAPIIT-UNAM grant IN-105726 and FSMP for their partial support. 

The authors thank Jim Pitman for insightful discussions.

\end{document}